\documentclass[12 pt]{amsart}

\usepackage{amsthm}
\usepackage{mathrsfs}
\usepackage{amssymb}
\usepackage{latexsym}
\usepackage{url}
\usepackage{amsmath}
\usepackage{verbatim}
\usepackage{graphicx}
\usepackage{epsf}
\usepackage{enumerate}
\usepackage{color}
\usepackage{geometry}
\geometry{top=1.25in, bottom=1.25in, left=1.3in, right=1.3in}
\usepackage{hyperref}
\definecolor{dark-blue}{rgb}{0,0,0.6}
\definecolor{Purple}{rgb}{0.2,0,0.25}
\hypersetup{breaklinks=true,
pagecolor=white,
colorlinks=true,
citecolor=dark-blue,
filecolor=black,%
linkcolor=Purple,%
urlcolor=black
}

\urlstyle{rm}

\newtheorem{thm}{Theorem}[section]

\newtheorem{lem}[thm]{Lemma}
\newtheorem{defin}[thm]{Definition}

\theoremstyle{definition}

\newtheorem{remark}[thm]{Remark}

\newcommand{\R}{\mathbb{R}}
\newcommand{\Q}{\mathbb{Q}}
\newcommand{\C}{\mathbb{C}}
\newcommand{\N}{\mathbb{N}}
\newcommand{\Z}{\mathbb{Z}}
\newcommand{\T}{\mathbb{T}}
\newcommand{\re}{\textnormal{Re}}
\newcommand{\im}{\textnormal{Im}}

\newcommand{\card}{\textnormal{card}}
\newcommand{\bref}[1]{\textbf{\ref{#1}}} 
\newcommand{\beqref}[1]{\textbf{(\ref{#1})}} 

\title[Remarks on the Cauchy functional equation]{Remarks on the Cauchy functional equation and variations of it}
\author{Daniel Reem}
\address{Department of Mathematics, The Technion - Israel Institute of Technology, Haifa 3200003, Israel. Part of this work was done while the author was at the National Institute of Pure and Applied Mathematics (IMPA), Rio de Janeiro, Brazil, and at the Institute of Mathematical and Computer Sciences (ICMC), University of S\~ao Paulo, S\~ao Carlos, Brazil. } 
\email{dream@tx.technion.ac.il}

\date{February 8, 2017}

\subjclass[2010]{39B52, 43A22, 39B22, 39B05, 26B99, 22B99}
\keywords{ Complex exponent, initial value, locally measurable, periodic, regularity condition, restricted domain, stability, the Cauchy functional equation}
\begin{document}
\maketitle
\begin{abstract}
This paper examines various aspects related to the Cauchy functional equation $f(x+y)=f(x)+f(y)$, 
a fundamental equation in the theory of functional equations. In particular, it considers its  solvability and its stability relative to subsets of multi-dimensional Euclidean spaces and tori. 
Several new types  of regularity conditions are introduced, such as a one in which a complex  exponent of the  unknown function is locally measurable. An initial value approach to analyzing  this  equation is considered too and it yields a few by-products, such as the existence of a  non-constant real function having an uncountable set of periods which are linearly independent over the rationals. The analysis is extended to related equations such as the Jensen equation, the multiplicative Cauchy equation, and the Pexider equation. 
The paper also includes a rather comprehensive survey of the history of 
the Cauchy equation. 
\end{abstract}

\section{Introduction}\label{sec:Introduction}
\subsection{History} 
A well-known and fundamental equation in the theory of functional equations is 
the Cauchy functional equation 
\begin{equation}\label{eq:Cauchy}
f(x+y)=f(x)+f(y).
\end{equation}
This equation (as well as its three sisters $f(x+y)=f(x)f(y)$, $f(xy)=f(x)+f(y)$, $f(xy)=f(x)f(y)$), was introduced by Cauchy in his book \cite[p. 103]{Cauchy1821} from 1821.  Cauchy rigorously analyzed \beqref{eq:Cauchy}  under the assumptions that the unknown function $f$ is a continuous function from $\R$ to $\R$ and the variables $x$ and $y$ can be arbitrary real numbers. (Gauss \cite[p. 212]{Gauss1809} also considered a certain version of \beqref{eq:Cauchy} in his book from 1809, but the consideration was neither rigorous nor very explicit; see also \cite[pp. 47, 106--109]{Aczel}. Going back even further, to 1794, one can find in the book of  Legendre \cite{Legendre1794book}, in the section devoted to the consideration of ratios of areas of rectangles, both a use and an analysis of \beqref{eq:Cauchy}, but they are also neither rigorous nor explicit; see also \cite[pp. 369--370]{AczelDhombres}) Cauchy's equation \beqref{eq:Cauchy} has many applications 
inside the theory of functional equations and in other mathematical and scientific fields including geometry, real and complex analysis, probability, functional analysis, dynamical systems, partial differential equations, classical and statistical mechanics,  and economics. 
It is therefore nothing but natural that it has attracted the attention of 
many authors for a long period of time. Formulating this fact in more romantic terms \cite[p. 2]{Kannappan2009}, Kannappan wrote: ``Researchers fell in love with these equations [Cauchy's equation \beqref{eq:Cauchy} and its sisters mentioned above], and the romance will continue and will result in many more interesting results''.

A common path of investigation of \beqref{eq:Cauchy} is to impose 
 various types of ``regularity'' conditions on the unknown function. It turns out that  in the specific case where $f:\R\to\R$, each of these conditions implies the existence of some $c\in\R$ such that $f(x)=cx$ for all $x\in \R$, and this fact has been proved in various ways. For instance, Cauchy \cite[pp. 103--106]{Cauchy1821} assumed that $f$ is continuous,  Darboux  showed that $f$ can be assumed to be either monotone \cite{Darboux} 
 or bounded on an interval \cite{Darboux1880},  
 Fr\'echet \cite{Frechet}, Blumberg \cite{Blumberg1919},  Banach \cite{BanachCauchy}, 
 Sierpi\'nski \cite{Sierpinski1,Sierpinski2},  Kac \cite{Kac},  Alexiewicz-Orlicz \cite{AlexiewiczOrlicz}, and Figiel \cite{Figiel1969} 
 assumed that $f$ is Lebesgue measurable, 
Kormes  \cite{Kormes} assumed that $f$ is bounded  on a measurable  set of positive measure, Ostrowski \cite{Ostrowski} 
and  Kestelman \cite{Kestelman} assumed that $f$ is bounded from one side on a measurable  set of positive measure,  and Mehdi \cite{Mehdi} assumed that $f$ is bounded above on a second category Baire set. On the other hand, in 1905 Hamel \cite{Hamel} investigated \beqref{eq:Cauchy} without any further condition of $f$. By using the Hamel basis, which was introduced for this  purpose, he showed that there are nonlinear solutions to \beqref{eq:Cauchy} and he found all of them (see also Remark \bref{rem:Nonlinear} below). 

The functional equation \beqref{eq:Cauchy} has been generalized or modified in many other directions. A typical path 
in this spirit is to take the domain and range of  $f$ to be groups of a certain kind, e.g., (locally compact) Polish groups, and to prove that if $f$ satisfies a certain kind of measurability assumption  (say Baire, Haar, or Christensen), and maybe additional assumptions, then it must be continuous (and, depending on the setting, possibly linear \cite[pp. 36--38]{AczelDhombres}). See, among others, Banach \cite[p. 23]{BanachBook}, Pettis \cite{PettisCauchy}, Hewitt-Ross \cite[p. 346]{HewittRoss}, Monroe \cite{Monroe}, Figiel \cite{Figiel1969}, Itzkowitz  \cite{Itzkowitz}, Baker \cite{Baker}, Brzdek \cite{Brzdek},  Christensen \cite{Christensen}, Gajda \cite{Gajda1986}, Grosse-Erdmann \cite{Grosse-Erdmann1989jour}, J{\'a}rai-Sz{\'e}kelyhidi \cite{JaraiSzekelyhidi1996jour}, Neeb \cite{Neeb}, and Rosendal \cite{Rosendal}. In some of these generalizations the form of the functional  equation is more general than \beqref{eq:Cauchy}. (Remark: the terminology used in some of the references mentioned above and below is sometimes different from that used here, and, in particular, instead of speaking on the Cauchy functional equation, from time to time one speaks on homomorphism or additive functions; below we will sometimes also call solutions of \beqref{eq:Cauchy} ``additive functions''.)

A different path  is to impose assumptions  different from measurability, e.g.,  algebraic ones. See the book of Dales  \cite{Dales2000book} for an extensive discussion on this issue. Yet another path of generalization/modification is to change the domain of  definition of $f$ 
so that it will no longer have a necessarily nice algebraic structure but rather 
it will merely be a certain subset of the original domain, e.g., an orthant, a convex set,  a 
complement of a measure zero set, and so forth. A variation of this is to 
change the domain of validity of the equation, for instance, to assume that $f$ satisfies \beqref{eq:Cauchy} only for pairs $(x,y)$ belonging to a subset of $\R^{2n}$, e.g., a manifold 
(and $f$ may be defined on  the whole space or on a subset of it). 
In both cases one may conclude the existence of nonlinear solutions to \beqref{eq:Cauchy} 
 (even under ``strong'' regularity conditions) or that (in the case where $f$ is assumed to be 
 defined on the whole space) $f$ must satisfy \beqref{eq:Cauchy} for all possible pairs $(x,y)$. 
  For a very partial list of related works see Pisot-Schoenberg \cite{PisotSchoenberg1964}, 
Aczel-Erd\"os \cite{AczelErdos1965}, Jurkat \cite{Jurkat1965}, de Bruijn \cite{deBruijn1966}, 
Zdun \cite{Zdun1972jour}, Kuczma \cite{Kuczma1973}, Dhombres-Ger \cite{DhombresGer1978jour}, 
Dhombres \cite{Dhombres1979}, Forti \cite{Forti1983}, Matkowski \cite{Matkowski1985},  Sablik \cite{Sablik1990}, Szabo \cite{Szabo1993}, 
Choczewski et al. \cite{CCR1994}, Alsina and Garc Roig \cite{AGR1994}, 
 Ger-Sikorska \cite{GerSikorska1997}, 
Skof \cite{Skof2004}, Paneah \cite{Paneah2004,Paneah2007}, Shalit \cite{Shalit2005,Shalit2009}, 
Laohakosol-Pimsert \cite{LaohakosolPimsert2011} 
and the references therein.

More information related to \beqref{eq:Cauchy}, including many additional references, can  be found, for instance, in the books of Acz\'el \cite{Aczel}, Acz\'el-Dhombres \cite{AczelDhombres} (both books also mention 
 applications of \beqref{eq:Cauchy} in various scientific areas), Czerwik \cite{Czerwik2002book} (the book contains also a treatment of the set valued version of Cauchy's equation), Einchorn \cite{Einchorn1978} (for applications 
 in economics), J{\'a}rai  \cite{Jarai} (various regularity conditions), Kannappan \cite{Kannappan2009}, 
 Kuczma \cite{Kuczma2009book},  and the surveys of Kannappan \cite{Kannappan1989} and Wilansky \cite{Wilansky}. See also Section \bref{sec:Stability} below regarding a short survey and several  references related to stability in the context of the Cauchy equation. 

 \subsection{Contribution and paper layout} 
 This paper examines various aspects related to \beqref{eq:Cauchy}, including its solvability and its stability under new kind of assumptions, and presents new methods to analyze it. The first result that is 
 proved (Section \bref{sec:Measurable} below) is the following: 
 given $n\in\N$, if $f:\R^n\to \R$ satisfies \beqref{eq:Cauchy} for every $x,y\in\R^n$ and if a complex exponent of $f$ is locally  measurable, 
  that is,  $e^{if}$ is Lebesgue measurable on a certain open ball (and hence on a certain non-degenerate and compact hypercube contained in that ball), then $f$ must be linear,  
 namely, there exists $c\in\R^n$ such that $f(x)=c\cdot x$ for all $x\in\R^n$   
 (here, of course,  $c\cdot x$ denotes the inner product between the vectors $c=(c_k)_{k=1}^n$  and $x=(x_k)_{k=1}^n$, that is, $c\cdot x:=\sum_{k=1}^n c_k x_k$). This regularity condition is strictly weaker than measurability (Remark \bref{rem:e^if} below) and although it seems to be new, traces of it can be detected in the paper of Kac \cite{Kac} (Remark \bref{rem:Kac} below). In Section \bref{sec:IVP} 
 the same result is proved using a different approach by considering  \beqref{eq:Cauchy} as an initial value problem and utilizing periodicity properties which arise naturally. 
 Two by-products of this approach are the observation that there exist nonlinear solutions of \beqref{eq:Cauchy} having an uncountable dense set of periods which are (the periods) linearly  independent over the rationals (Remark \bref{rem:Nonlinear} below) and the observation that any  solution to \beqref{eq:Cauchy} 
 defined on a multidimensional topological (flat) torus  must vanish identically (Section \bref{sec:Torus} below).  
The previous analysis is extended to other settings: 
 to Jensen's equation  (Section \bref{sec:Jensen} below), 
 to Cauchy's equation on restricted domains 
 satisfying a certain abstract algebraic condition and related conditions (Section \bref{sec:Restricted} below), 
 to multiplicative Cauchy's equation (Section \bref{sec:Multiplicative} below), 
  to an alternative version of Cauchy's equation 
 (Section \bref{sec:Alternative} below), to Pexider's equation (Section \bref{sec:Pexider} below), 
and to the stability of Cauchy's equation (Section \bref{sec:Stability} below). 
We conclude the paper in Section \bref{sec:Remarks} with a few remarks regarding possible additional extensions, 
such as to a setting in which the  regularity condition is abstract (Remark \bref{rem:Abstract} below). The paper also has an appendix (Section \bref{sec:Appendix} below) which contains proofs of several assertions.

\section{The case where $e^{if}$ is locally measurable, $\R^n$}\label{sec:Measurable}
 This section contains the main theorem of the paper (Theorem \bref{thm:CauchyMeasurable} below), as well as several remarks related to it (Remarks \bref{rem:e^if}--\bref{rem:Kac} below). 
\begin{thm}\label{thm:CauchyMeasurable}
If $f:\R^n\to \R$ satisfies \beqref{eq:Cauchy} for every $x,y\in\R^n$ and if $e^{if}$ is locally Lebesgue measurable, then there exists some $c\in \R^n$ such that $f(x)=c\cdot x$ for all $x\in \R^n$.
\end{thm}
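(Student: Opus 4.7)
The plan is to mimic the structure of the proof of Theorem \ref{thm:CauchyFA}: reduce to an initial value problem whose only candidate solution besides a chosen linear one is a periodic Cauchy solution, and then kill the periodic piece using Lemma \ref{lem:UniqueRn}. Concretely, let $\{u_1,\ldots,u_n\}$ be the basis generating the compact parallelepiped $I$. Since this basis spans $\R^n$, there is a unique $c\in \R^n$ with $c\cdot u_k=f(u_k)$ for $k=1,\ldots,n$. Set $g_1(x)=c\cdot x$ and $g_2=f$; both satisfy \eqref{eq:Cauchy} and agree on $u_1,\ldots,u_n$. Exactly as in Theorem \ref{thm:CauchyFA}, by linearity of \eqref{eq:Cauchy} it suffices to prove that the only $g:\R^n\to\R$ satisfying \eqref{eq:Cauchy} together with the initial conditions $g(u_k)=0$ for $k=1,\ldots,n$, and such that $e^{ig}$ is measurable on $I$, is $g\equiv 0$.

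The next step is to verify that $g=f-g_1$ indeed fits into the framework of Lemma \ref{lem:UniqueRn}. First, by substituting $y=u_k$ into \eqref{eq:Cauchy} one gets $g(x+u_k)=g(x)+g(u_k)=g(x)$, so $g$ is $u_k$-periodic for every $k$. Second, $e^{ig}=e^{if}\cdot e^{-ig_1}$, and since $e^{if}$ is measurable on $I$ by hypothesis while $e^{-ig_1(x)}=e^{-ic\cdot x}$ is continuous, the product $e^{ig}$ is measurable on $I$. Hence $g$ meets all hypotheses of Lemma \ref{lem:UniqueRn}, so $g\equiv 0$, i.e.\ $f(x)=c\cdot x$ for all $x\in\R^n$.

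I expect no serious obstacle beyond this reduction, because Lemma \ref{lem:UniqueRn} already does the real analytic work (via periodicity, rational scaling invariance, the fact that $\int_I e^{i\alpha g}\neq 0$ for some small rational $\alpha$, and the standard argument that $|e^{ig}|=1$ forbids $e^{ig}$ from vanishing a.e.). The only conceptual point to be careful about is the existence and uniqueness of $c\in\R^n$ with $c\cdot u_k=f(u_k)$ — this is simply linear algebra, using that $\{u_1,\ldots,u_n\}$ is a basis — and the routine observation that multiplying by the continuous (hence measurable) unimodular factor $e^{-ic\cdot x}$ preserves measurability on $I$. Once these are in hand, the theorem follows in a few lines from Lemma \ref{lem:UniqueRn}.
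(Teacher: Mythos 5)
Your proposal is correct and follows essentially the same route as the paper: defining $c$ by the linear system $c\cdot u_k=f(u_k)$, reducing by linearity to the initial value problem with $g(u_k)=0$, and invoking Lemma \ref{lem:UniqueRn}. You additionally spell out the verification that $g=f-g_1$ is $u_k$-periodic and that $e^{ig}=e^{if}e^{-ic\cdot x}$ remains measurable, details the paper leaves implicit.
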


\begin{proof}
By applying the function $t\mapsto e^{it}$, $t\in\R$ on both sides of \beqref{eq:Cauchy} we have  $e^{if(x+y)}=e^{if(x)}e^{if(y)}$ 
 for all $x,y\in \R^n$. Let $I$ be any hypercube on which $e^{if}$ is known to be measurable. By taking an arbitrary $y\in\R^n$ and $x\in I$ and using the fact that the function $x\mapsto e^{if(x)}e^{if(y)}$, $x\in I$ is measurable on $I$, we see that $e^{if}$ is  measurable on any translated  copy of $I$. Because $\R^n$ can be represented as a countable union of translated copies of $I$, we conclude that $e^{if}$ is measurable on $\R^n$. 

Now, from a well-known theorem about the continuity of measurable homomorphisms acting between certain groups 
(in particular, between a locally compact  group and the circle group: see  \cite[p. 346]{HewittRoss} for an even more general statement), it follows that the measurable homomorphism $e^{if}$ is continuous. Since its image is 
obviously contained in the unit circle of the complex plane $\C$, it follows that $e^{if}$ is a character (namely, a continuous homomorphism from $\R^n$ to the unit circle in $\C$).
 By a well known representation result regarding the character group of $\R^n$ (see, e.g., \cite[pp. 366--368]{HewittRoss}) it follows that there exists $c\in \R^n$ such 
that 
\begin{equation}\label{eq:Character}
e^{if(x)}=e^{ic\cdot x},\quad \forall x\in \R^n. 
\end{equation}
Alternatively,  one can deduce \beqref{eq:Character} 
 without referring to the theory presented in \cite{HewittRoss} by using 
 an approach based on Fourier series \cite{ReemCharacterFourier}. 
 
Anyway, because of \beqref{eq:Character} there exists a function $k$ from $\R^n$ to the set of  integers such that $f(x)=c\cdot x+2\pi k(x)$ for all $x\in \R^n$. Since $f$ satisfies \beqref{eq:Cauchy} for every $x,y\in\R^n$ 
and since \beqref{eq:Cauchy} is linear, we conclude that $k$ also satisfies \beqref{eq:Cauchy} for all $x,y\in\R^n$. It remains to show that $k$ vanishes identically. Assume that this is not true, 
namely $k(x)\neq 0$ for some fixed $x$. Then by \beqref{eq:Cauchy} (with $k$ instead of $f$) we have $k(x/m)=k(x)/m$ for any positive integer $m$. In particular this is true for $m>2|k(x)|$, a  contradiction since $0<|k(x)/m|<0.5<1$  but $k(x/m)$ is an integer. Thus $k\equiv 0$ and $f(x)=c\cdot x$ for each $x\in\R^n$. Finally, an immediate verification shows that $f$, having this linear form, still satisfies \beqref{eq:Cauchy} for every $x,y\in\R^n$. 
\end{proof}

\begin{remark}\label{rem:e^if}
 The regularity condition imposed on $f$ is strictly weaker than merely measurability. 
 Indeed, if $f$ is Lebesgue measurable, then so is $e^{if}$ as a composition of $f$ with the  complex exponent function 
 $t\mapsto e^{it}$, $t\in\R$ which is continuous. On the other hand, even if $e^{if}$ is a continuous (multiplicative) homomorphism,  then $f$ itself need not be  measurable as the following simple  example shows:  for each integer $m$ let $A_m$  be a  non-measurable set of $[m,m+1)$,  and let $f:\R\to\R$ be defined by $f(x):=2\pi m$ if $x\in A_m$, and $f(x):=-2\pi (m+1)$ if $x\in [m,m+1)\backslash A_m$. 
 
 Another example, perhaps more interesting (see also Remark \bref{rem:exp_fm} below), is $f(x):=2\pi [g(x)],\,x\in\R$ where 
 $[t]$ is the greatest integer not exceeding the real number $t$ and $g$ is an arbitrary nonlinear solution of \beqref{eq:Cauchy}. 
 Indeed, $e^{if}\equiv 1$ but there can be no set of positive measure on which $f$ is bounded, since if 
 $A$ is such a set, then also $g$ is bounded on $A$ and by the theorem of Kormes \cite[Theorem I]{Kormes} 
 it follows that $g$ is linear, a contradiction. Since any measurable function $h:\R\to\R$ is bounded 
 on some set of positive measure (simply on $A_m:=\{x: h(x)\in [m,m+1)\}$ for some integer $m$ because 
 a measurable function is finite almost everywhere and hence $\bigcup_{m\in\Z}A_m$ 
 is, up to a set of measure 0, the whole space) it follows that $f$ is not measurable. 
\end{remark} 

\begin{remark}\label{rem:Kac}
The regularity condition that $e^{if}$ is measurable has traces in the 1937 paper of Kac \cite{Kac}. Indeed, while he assumed that $f:\R\to\R$ was measurable, in the proof he implicitly  used only the  measurability of  $e^{if}$. Modifications of Kac's proof can be found in other places, e.g., in \cite[pp. 54--56]{AczelDhombres}, \cite[pp. 39--40]{Czerwik2002book} (in these places, however, $f$ is  from $\R$ to $\C$ and it satisfies the multiplicative Cauchy equation  \beqref{eq:CauchyMult} below rather than the additive one \beqref{eq:Cauchy} above). Anyway, in all of these places (including in Kac's paper) the proofs seem to be restricted to the one-dimensional case and it is explicitly assumed that $f$ is measurable and not that $e^{if}$ is measurable. 
\end{remark}

\section{The initial value approach}\label{sec:IVP}
In this section a different proof of Theorem \bref{thm:CauchyMeasurable} is presented. 
The approach is to consider \beqref{eq:Cauchy} as an initial value problem. The inspiration comes from differential equations. The derivation is based on two lemmas. 
The first (Lemma \bref{lem:period2} below) is known. For the sake of completeness  and because it is not easy to find in the literature a proof of this lemma for the case $n>1$, a proof is provided in the appendix (Section \bref{sec:Appendix} below). The second lemma (Lemma \bref{lem:UniqueRn} below) contains a key technical assertion. 
 
In Lemma \bref{lem:period2} below (and elsewhere) we use the following terminology and observations. A (closed) hyper-parallelepiped spanned by some basis  is the set $I:=\{u_0+\sum_{k=1}^n t_k u_k:\,\,(t_k)_{k=1}^n\in [0,1]^n\}$  where $U:=\{u_1,\ldots,u_n\}$ is a given  basis of $\R^n$ and $u_0\in\R^n$ is a given point. A right-semi-open hyper-parallelepiped is the  set $I:=\{u_0+\sum_{k=1}^n t_k u_k: \,(t_k)_{k=1}^n\in [0,1)^n\}$, namely a hyper-parallelepiped from which the right facets have been removed. Of course, a (closed or right-semi-open) hypercube is a (closed or right-semi-open, respectively) hyper-parallelepiped spanned by an orthogonal basis, where the lengths of all the basis elements are equal. We say that a function $h:\R^n\to\C$ is $U$-periodic (or periodic with respect to the basis $U$) if it satisfies  $h(x+u_k)=h(x)$ for each $x\in \R^n$ and each $k\in \{1,\ldots,n\}$.

\begin{lem}\label{lem:period2}
Let $I\subset \R^n$ be a given (closed or semi-open) hyper-parallelepiped spanned by some basis $U:=\{u_1,\ldots,u_n\}$ of $\R^n$. Suppose that $h:\R^n \to \C$ is $U$-periodic. If $h$ is Lebesgue integrable on $I$, then for each $y\in \R^n$ the translation $h_y:\R^n\to\R$ defined by $h_y(x):=h(x+y)$ for all $x\in\R^n$ is Lebesgue integrable on $I$ and $\int_I h_y(x)dx=\int_I h(x)dx$. 
\end{lem}

\begin{lem}\label{lem:UniqueRn}
Let $g:\R^n\to \R$. Suppose that $I\subset \R^n$ is a hypercube spanned by some basis $U:=\{u_1,\ldots,u_n\}$. Assume that $e^{ig}$ is Lebesgue measurable on $I$ and that $g$ is $U$-periodic. If $g$ satisfies \beqref{eq:Cauchy} for every $x,y\in\R^n$, then $g$ vanishes identically.
\end{lem}
\begin{proof}
Since $g$ satisfies \beqref{eq:Cauchy} for every $x,y\in\R^n$ and since $e^{ig}$ is measurable on some hypercube $I$, we can use the same reasoning given in   Section \bref{sec:Measurable} above (the beginning of the first proof of Theorem \bref{thm:CauchyMeasurable}) to conclude that $e^{ig}$ is measurable on $\R^n$ and hence on any hypercube contained in $\R^n$. Given any rational number $\alpha$, let $h_{\alpha}:\R^n\to\C$ be defined by $h_{\alpha}(x):=e^{ig(\alpha x)}$ for every $x\in\R^n$. The change of variables formula for scaling \cite[pp. 170--171]{Jones2001book} and the previous discussion on the measurability of $e^{ig}$ imply that $h_{\alpha}$ is  measurable on $\R^n$, and, being bounded, it is Lebesgue integrable over any hypercube. In particular $h_{\alpha}$ is measurable over $I_0:=\{\sum_{k=1}^n t_k u_k:\,\,(t_k)_{k=1}^n\in [0,1)^n\}$. In addition, we have 
\begin{equation}\label{eq:h_alpha}
h_{\alpha}(x)=e^{i\alpha g(x)},\quad \forall x\in\R^n,\,\forall\,\alpha\,\,\textnormal{rational}
\end{equation}
because $g(\alpha x)=\alpha g(x)$ for each rational $\alpha$ and each $x\in\R^n$ (this identity is, of course, a well known fact which easily follows from the assumption that $g$ satisfies \beqref{eq:Cauchy} for every $x,y\in\R^n$: see \cite[Theorem 5.2.1, pp. 128--129]{Kuczma2009book}).

Let $\alpha>0$ be some rational satisfying $\int_{I_0} h_{\alpha}(x)dx\neq 0$. It will be  shown in a moment that such an $\alpha$ does exist.  Since $g$ is $U$-periodic, the identity \beqref{eq:h_alpha} implies that $h_{\alpha}$ is also $U$-periodic. This fact, the fact that $I_0$ is also spanned by $U$, and Lemma \bref{lem:period2} (with $h_{\alpha}$ and $I_0$ instead of $h$ and $I$, respectively), imply that $\int_{I_0} h_{\alpha}(x)dx=\int_{I_0}h_{\alpha}(x+y)dx$ for every $y\in\R^n$. But \beqref{eq:h_alpha} and \beqref{eq:Cauchy} imply that 
$\int_{I_0}h_{\alpha}(x+y)dx=\int_{I_0}e^{i\alpha g(x+y)}dx=\int_{I_0}e^{i\alpha g(x)}e^{i\alpha g(y)}dx=e^{i\alpha g(y)}\int_{I_0}h_{\alpha}(x)dx$ for all $y\in\R^n$. We conclude that $\int_{I_0}h_{\alpha}(x)dx=e^{i\alpha g(y)}\int_{I_0}h_{\alpha}(x)dx$ for all $y\in\R^n$. As a result of this equality and the assumption that $\int_{I_0} h_{\alpha}(x)dx\neq 0$ it follows that $1=e^{i\alpha g(y)}$ for all $y\in \R^n$. Therefore $\alpha g(y)\in \{2\pi k: k\in \Z\}$ for any $y\in \R^n$. Assume for a contradiction that $\alpha g(y_0)\neq 0$ for some $y_0$. We can write $2\pi k_0=\alpha g(y_0)$ for some  $0\neq k_0\in \Z$. Since $k_0\neq 0$ and because $g(qy_0)=qg(y_0)$ for each rational $q$, we have $\alpha g(y_0/(7k_0))=\alpha(1/(7k_0))g(y_0)=2\pi k_0/(7k_0)=2\pi/7$. Thus $\alpha g(y)\notin \{2\pi k: k\in \Z\}$ for $y:=y_0/(7k_0)$, a contradiction. Hence $g\equiv 0$. 
 
 It remains to prove that $\int_{I_0} e^{i\alpha g(x)}dx\neq 0$ for some positive rational $\alpha$.  If this is not true, 
 then in particular $\int_{I_0} e^{i \alpha g(x)}dx=0$ for all rationals $\alpha\in (0,1]$.  As a result  
\begin{equation*}
0=\int_{I_0}e^{i\alpha g(x)}dx=\int_{I_0}e^{i g(\alpha x)}dx=\int_{\alpha I_0} e^{i g(w)}dw/\alpha^n=\int_{\alpha I_0} e^{i g(w)}dw
\end{equation*} 
for each  rational $\alpha\in (0,1]$ by  the change of variables formula for scaling. 
 Therefore, given $y\in\R^n$ and a rational number $\alpha\in (0,1]$, we have 
\begin{equation*}
\int_{y+\alpha I_0}e^{i g(x)}dx=\int_{\alpha I_0}e^{i g(z+y)}dz=e^{ig(y)}\int_{\alpha I_0}e^{i g(z)}dz=0. 
\end{equation*}
It follows in particular that $\int_{S}e^{i g(x)}dx=0$ whenever $S$ is a dyadic copy of $I_0$, namely, $S$ is obtained 
from $I_0$ by scaling it by $2^{-m}$ for some nonnegative integer $m$ and translating it by a vector of 
the form $y=\sum_{k=1}^n (m_k/2^{m})u_k$, $m_k\in \Z$. The above is true even if we remove from, or add to,  
these dyadic copies of $I_0$ several parts of their (measure 0) boundaries. It is well-known that any nonempty open set $S$ of $\R^n$  is a countable union of pairwise disjoint dyadic copies of $I_0$, namely $S=\cup_{m=1}^{\infty}S_m$ where $S_m$ is a dyadic copy of $I_0$ for each $m\in\N$, and $S_m\cap S_p=\emptyset$ whenever $m\neq p$ (see \cite[p. 50]{Rudin1987book}; the proof given there is for $I_0=[0,1)^n$, namely $U$ is the canonical basis, but the same proof holds for an arbitrary hypercube, or, more generally, an arbitrary hyper-parallelepiped, simply by representing the vectors in $\R^n$ using the given basis $U$). This fact, combined with the  countable additivity of the integral, implies that for each nonempty open set $S$ we have 
$\int_S e^{ig(w)}dw=\sum_{m=1}^{\infty}\int_{S_m}e^{ig(w)}=\sum_{m=1}^{\infty}0=0$. 

Now, given a measurable subset $S\neq\emptyset$ of $I_0$, a simple consequence of the (upper) regularity of the Lebesgue  measure $\mu$ (see \cite[Property M9, p. 50]{Jones2001book}) implies  that there exists a decreasing sequence  $(U_m)_{m=1}^{\infty}$ of open subsets of $\R^n$   satisfying $S\subseteq U_m$ and  $\mu(U_m\backslash S)<1/m$ for each $m\in\N$. For each $A\subseteq \R^n$ let $1_A$ be the characteristic function of $A$, namely $1_A(x):=1$ if $x\in A$ and $1_A(x):=0$ otherwise. It must be that 
 $\lim_{m\to \infty}1_{U_m}(x)=1_S(x)$ for each $x\notin (\cap_{m=1}^{\infty}U_m)\backslash S$. Indeed, if $x\in S$, then both sides are equal to 1 since $1_{U_m}(x)=1_S(x)=1$, and if $x\notin \cap_{m=1}^{\infty}U_m$, then $x\notin U_{m_0}$ for some $m_0\in\N$, so $x\notin U_m$ for all $m>m_0$ because $(U_m)_{m=1}^{\infty}$ is decreasing. This fact and the inclusion $S\subseteq U_m$ for all $m\in\N$ imply that both sides of  $\lim_{m\to \infty}1_{U_m}(x)=1_S(x)$ equal 0. But $(\cap_{m=1}^{\infty}U_m)\backslash S$ is a measurable subset satisfying  $\mu((\cap_{m=1}^{\infty}U_m)\backslash S)\leq \mu(U_m\backslash S)<1/m$ for each $m\in\N$. Thus $\mu((\cap_{m=1}^{\infty}U_m)\backslash S)=0$. It follows that  $\lim_{m\to \infty}1_{U_m}(x)=1_S(x)$ and hence $\lim_{m\to \infty}(1_{U_m}(x)e^{ig(x)})=1_S(x)e^{ig(x)}$ for almost all $x\in \R^n$. Since $(U_m)_{m=1}^{\infty}$ is a decreasing sequence of subsets which contain $S$ we have $1_S(x)=|1_S(x)e^{ig(x)}|\leq |1_{U_m}(x)e^{ig(x)}|=1_{U_m}(x)\leq 1_{U_1}(x)$ for all $x\in \R^n$ and all $m\in\N$. Since $1_{U_1}$ is Lebesgue integrable on $\R^n$ (because $\mu(U_1)\leq \mu(U_1\backslash S)+\mu(S)<1+\mu(I_0)<\infty$), we can conclude the following equality from the dominated convergence theorem and the fact established earlier that the integral of $e^{ig}$ vanishes on every nonempty open subset of $\R^n$:
\begin{equation*}
\int_{S}e^{ig(w)}dw=\int_{\R^n} 1_S(w)e^{ig(w)}dw=\lim_{m\to\infty}\int_{\R^n}1_{U_m}e^{ig(w)}dw=\lim_{m\to\infty}\int_{U_m}e^{ig(w)}dw=0. 
\end{equation*}
Because $S$ was an arbitrary measurable subset of $I_0$ we can conclude that $e^{ig(w)}=0$ for  almost every $w\in I_0$ (see, e.g., \cite[p. 105]{KurtzSwartz2004}). On the other hand, we obviously have $|e^{ig(w)}|=1$  for every $w\in I_0$ since the range of $g$ is $\R$. This contradiction shows the existence of a positive rational $\alpha$ such that $\int_{I_0} e^{i\alpha g(x)}dx\neq 0$, as claimed.  
\end{proof}
In Lemma \bref{lem:UniqueRn} above instead of a hypercube we can take any hyper-parallelepiped spanned by some basis (the proof is the same), but we have found no real value in working with  a general basis  since any hyper-parallelepiped contains a hypercube and we are interested only in local measurability of $e^{ig}$. On the other hand, in the abstract regularity condition mentioned in Remark \bref{rem:Abstract} below, we preferred to use a more general statement using a general basis. 

\begin{proof}[{\bf Proof of Theorem \bref{thm:CauchyMeasurable}}]
Let $I$ be a hypercube on which $e^{if}$ is measurable and suppose that $U:=\{u_1,\ldots,u_n\}$ is the (orthogonal) basis of $\R^n$ which spans $I$. Consider the $n\times n$ matrix whose $k$-th raw is $u_k$, $k\in\{1,\ldots,n\}$. This matrix is invertible since $\{u_1,\ldots,u_n\}$ is a basis. Hence there exists a unique  $c=(c_k)_{k=1}^n\in \R^n$ which satisfies the linear system 
\begin{equation}\label{eq:uc=f(u)}
u_k\cdot c=f(u_k),\quad k\in\{1,\ldots,n\}.
\end{equation}
One solution to \beqref{eq:Cauchy} is $g_1:=f$ and another one is the linear function 
$g_2(x):=c\cdot x$, $x\in\R^n$. Moreover, by the choice of $c$ we have $g_1(u_k)=g_2(u_k)$ for all $k\in\{1,\ldots,n\}$. This fact and the linearity of \beqref{eq:Cauchy} imply that $g:=g_1-g_2$ satisfies the  following initial value problem 
\begin{subequations}\label{eq:CauchyThm_0}
\begin{equation}\label{eq:CauchyThm_g}
g(x+y)=g(x)+g(y)\quad \forall x,y\in \R^n, 
\end{equation}
\begin{equation}\label{eq:periodThm_g}
g(u_k)=0,\quad k=1,\ldots,n.
\end{equation}
\end{subequations}
 We claim that \beqref{eq:CauchyThm_0} has $g\equiv 0$ as its unique solution in the class of functions $g:\R^n\to\R$ having the property that $e^{ig}$ is measurable on $I$. Once we prove this claim we can conclude that $g_1=g_2$ because $g:=g_1-g_2$ solves \beqref{eq:CauchyThm_0} and it satisfies $e^{ig(x)}=e^{if(x)}e^{-ic\cdot x}$ for all $x\in\R^n$, namely it is a multiplication of two functions which are measurable on $I$ and hence it is also measurable on $I$. To see that the above mentioned claim holds, we observe that the zero function obviously solves  \beqref{eq:CauchyThm_0} and it belongs to the considered class of functions. On the other hand, given any solution $g$ to \beqref{eq:CauchyThm_0} in the considered class of functions, we have $g(x+u_k)=g(x)+g(u_k)=g(x)$ for all $x\in\R^n$ and all $k\in\{1,\ldots,n\}$, that is, $g$ is $U$-periodic. Since $e^{ig}$ is measurable on $I$ we can use Lemma \bref{lem:UniqueRn} to deduce that indeed $g\equiv 0$, i.e., we also have uniqueness, as required.
\end{proof}

\begin{remark}\label{rem:Integral}
Under the stronger assumption that $f:\R^n\to \R$ satisfies \beqref{eq:Cauchy} for every $x,y\in\R^n$ and is locally integrable, the approach used in this section allows one to give a simple proof that $f$ 
must be linear. Indeed, since $f$ is locally integrable, there exists a hypercube $I$ on which $f$ is integrable. Let $U:=\{u_1,\ldots,u_n\}$ be the basis which spans $I$. The linearity of \beqref{eq:Cauchy} implies, as in the second proof of Theorem \bref{thm:CauchyMeasurable} given above, that it is enough to show that $g\equiv 0$ 
is the unique solution to \beqref{eq:CauchyThm_0} in the class of functions which are integrable on $I$. By substituting $y:=u_k$  in \beqref{eq:Cauchy} we see that $g(x+u_k)=g(x)+0$ for all $x\in \R^n$ and all $k\in \{1,\ldots,n\}$, i.e., $g$ is $U$-periodic. Since we assume that $g$ is integrable on $I$, its periodicity implies (by Lemma \bref{lem:period2}) that it is integrable on any translated copy of $I$. Now, by fixing $y\in \R^n$ in \beqref{eq:Cauchy} and integrating over $I$ with respect to $x$, we see that 
\begin{equation*}\label{eq:Int}
\int_{I}g(x+y)dx=\int_I g(x)dx+g(y)\int_I dx. 
\end{equation*}
But $\int_{I}g(x+y)dx=\int_I g(x)dx$ according to Lemma \bref{lem:period2}. Hence $g(y)\mu(I)=0$ for each $y\in \R$, that is, $g\equiv 0$.
\end{remark}

\begin{remark}\label{rem:Nonlinear}
The nonlinear solutions of \beqref{eq:Cauchy} are examples of exotic functions, not only because they are not measurable 
(or, as Theorem \bref{thm:CauchyMeasurable} shows, even a complex exponent of them cannot be measurable), but, for instance, because their graphs are dense in $\R^2\,\,$  \cite[p. 14]{AczelDhombres},\cite{Hamel},\cite{Wilansky}. (But see, e.g., \cite{Jones1942,Wilansky} for ``nice'' properties that some of them satisfy.) The approach presented in 
this section suggests an idea  for creating functions 
which are even more exotic. 

Indeed, assume for simplicity that $f:\R\to\R$ and let $H:=\{x_{\gamma}: 
 \gamma\in \Gamma\}$ be a Hamel basis, that is, a basis of $\R$ where $\R$ is considered as a vector space over the field of rationals $\Q$. It is well-known that the  cardinality of $H$ is the continuum cardinality, that is, the same as the cardinality of $\R$ (see \cite[Theorem 4.2.3, p. 82]{Kuczma2009book}). As is well-known, and was observed by Hamel \cite{Hamel} (see also \cite[pp. 35--36]{Aczel},\cite[pp. 18--19]{AczelDhombres}), the general solution $f:\R\to\R$ which satisfies \beqref{eq:Cauchy} for all $x,y\in\R$ is obtained by associating with each $\gamma\in\Gamma$ an arbitrary $y_{\gamma}\in \R$, setting  $f(x_{\gamma}):=y_{\gamma}$, and extending $f$ to $\R$ in a linear manner (that is, given $y\in \R$, since $y=\sum_{k=1}^m q_{k} x_{{\gamma}_k}$ for some $m\in\N$, $q_1,\ldots,q_m\in\Q$, and $x_{{\gamma}_1},\ldots, x_{{\gamma}_m}\in H$, we define  $f(y):=\sum_{k=1}^m q_{k} y_{{\gamma}_k}$). Now fix an arbitrary nonempty finite or countable set $J$  of basis elements, define $f(x_{\gamma})$ to be any nonzero real number for all $\gamma\in J$, define  $f(x_{\gamma}):=0$ for the other basis elements $x_{\gamma},\,\gamma\notin J$, and extend $f$ to $\R$ in a linear manner as  explained above (in particular,  when $\card(J)=\aleph_0$, then there are at least $2^{\aleph_0}=\card(\R)$ such functions $f$). The exotic property that $f$ has is that it is  a non-constant solution to  \beqref{eq:Cauchy} which has a set of  linearly independent (over $\Q$) periods whose cardinality is $\card(\R)$. Indeed, the equality $f(x_{\gamma})=0$ for each $\gamma\in \Gamma\backslash J$ implies that   $f(x+x_{\gamma})=f(x)+f(x_{\gamma})=f(x)$ for each $x\in\R$, i.e., $x_{\gamma}$ is a period of $f$ for every $\gamma\in \Gamma\backslash J$, and it remains to observe that  $\card(\Gamma\backslash J)=\card(\Gamma)=\card(H)=\card(\R)$. 

Actually, it is possible to make the set of periods of $f$ to be dense in $\R$ (this, in  particular, will also show that $f$ is so-called microperiodic \cite[p. 332]{Kuczma2009book}, that is, it has arbitrary small positive  periods). Indeed, since the set of periods of $f$ coincides with the Hamel basis that we fixed in the beginning, it is sufficient to show that there exists a Hamel basis which is dense in $\R$ and then to define $f$ with respect to that  basis as done above. The existence of a dense Hamel basis is known (the main arguments below are based on ideas of Zoltan Boros \cite{Boros2015personal}): roughly speaking, one starts with an arbitrary Hamel basis $H$ and then rescales its elements by appropriate rational coefficients so the resulting basis will be dense. More precisely, one first observes using the Cantor-Schroeder-Bernstein theorem that the cardinality of the set $\mathscr{K}:=\{(a,b): a,b\in\R,\, a<b\}$ of all bounded and open intervals in $\R$ is at most $\card(\R^2)=\card(\R)$ and at least $\card(\R)$, and hence it is $\card(\R)$. Therefore there exists a bijection $\psi$ from $\mathscr{K}$ to $H$. Now, given $I\in \mathscr{K}$, since $\psi(I)\in H$ we have $\psi(I)\neq 0$ because the elements of a Hamel basis do not vanish. Since for all $x\in\R$, $x\neq 0$ the set $\{qx: q\in\Q\backslash\{0\}\}$ is dense in $\R$, it follows that $\{q\psi(I): q\in \Q\backslash\{0\}\}$ is dense in $\R$. From this result and the fact that $I$ is an open interval it follows that there exists a rational number $q_I$ such that $q_I\psi(I)\in I$. Denote $\tilde{H}:=\{q_I\psi(I): I\in\mathscr{K}\}$. Then $\tilde{H}$ is a weighted rescaling of $H$ by non-zero rationals and hence $\tilde{H}$ is also a Hamel basis. Since from the construction of $\tilde{H}$ for all $I\in \mathscr{K}$ we can find an element of $\tilde{H}$ which belongs to $I$, it follows that $\tilde{H}$ is dense in $\R$, as required. 
\end{remark}

\section{The case where $e^{if}$ is measurable, topological torus}\label{sec:Torus}
Now $f$ is assumed to be a real function defined on the $n$-dimensional topological (flat) torus $\T^n:=\R^n/\Z^n$ and having the property that $e^{if}$ is (Haar) measurable. We need the following known lemma which is a variation of Lemma \bref{lem:period2}. For the sake of completeness and since it is not very easy to find a full proof of this Lemma in the literature, we decided to include a proof in the appendix (Section \bref{sec:Appendix} below). 
\begin{lem}\label{lem:Torus}
Given any $y\in \T^n$ and $h:\T^n\to \C$, if $h\in L_1(\T^n)$, then the translation function $h_y(x):=h(x+y)$, $x\in\T^n$ is (Haar) integrable and we have 
$\int_{\T^n}h_y(x)d\mu(x)=\int_{\T^n}h(x)d\mu(x)$. 
\end{lem}

\begin{thm}\label{thm:CauchyTorus}
Given an additive  homomorphism $f$ from the finite dimensional topological torus $\T^n$ to $\R$, if $e^{if}$ is (Haar) measurable, then $f\equiv 0$.
\end{thm}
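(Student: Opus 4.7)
The plan is to reduce this theorem to Lemma \ref{lem:UniqueRn} by lifting $f$ to $\R^n$ through the quotient map. Concretely, realize the flat torus as $\T^n=\R^n/\Lambda$, where $\Lambda=\mathbb{Z}u_1+\cdots+\mathbb{Z}u_n$ is a lattice generated by some basis $\{u_1,\ldots,u_n\}$ of $\R^n$, and let $\pi:\R^n\to\T^n$ be the natural projection. Let $I$ be the compact parallelepiped generated by $\{u_1,\ldots,u_n\}$.

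Define the lift $\tilde{f}:=f\circ\pi:\R^n\to\R$. Since $\pi$ is an additive homomorphism and $f$ is an additive homomorphism of abelian groups, $\tilde{f}$ satisfies the Cauchy equation \eqref{eq:Cauchy} on $\R^n$. Moreover, since $\pi(x+u_k)=\pi(x)$ for each $k$, the lift is periodic in the sense required by Lemma \ref{lem:UniqueRn}, that is $\tilde{f}(x+u_k)=\tilde{f}(x)$ for all $x\in\R^n$ and all $k=1,\ldots,n$.

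The next step, which I expect to be the only genuinely delicate point, is to verify that $e^{i\tilde{f}}$ is Lebesgue measurable on $I$. This should follow from the fact that the Haar measure on $\T^n$ is (up to normalization) the push-forward of Lebesgue measure on the fundamental parallelepiped $I$ under $\pi$. More precisely, the restriction of $\pi$ to the interior of $I$ is a measure-preserving Borel isomorphism onto a co-null subset of $\T^n$. Hence the measurability of $e^{if}$ on $\T^n$ pulls back to measurability of $e^{i\tilde{f}}=e^{if}\circ\pi$ on $I$ (with only a null set to worry about on the boundary of $I$, which causes no issue).

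Having verified the hypotheses of Lemma \ref{lem:UniqueRn} for $\tilde{f}$, that lemma yields $\tilde{f}\equiv 0$ on $\R^n$. Since $\pi$ is surjective, this is equivalent to $f\equiv 0$ on $\T^n$, which is the desired conclusion. The only real work is the measurability transfer argument; the rest is a direct translation between the torus and its universal cover.
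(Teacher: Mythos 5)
Your proof is correct, but it takes a different route from the paper. The paper does not lift $f$ to the universal cover; instead it reruns the argument of Lemma \ref{lem:UniqueRn} intrinsically on $\T^n$, replacing the Lebesgue integral over the parallelepiped $I$ by the Haar integral over the torus, replacing Lemma \ref{lem:period2} by Lemma \ref{lem:Torus} (translation invariance of the Haar integral), and noting that the change-of-variables formula for scaling by small positive rationals $\alpha$ still holds on the flat torus. Your reduction via the quotient map $\pi:\R^n\to\T^n$ lets you invoke Lemma \ref{lem:UniqueRn} as a black box: the lift $\tilde f=f\circ\pi$ trivially satisfies \eqref{eq:Cauchy} and is $\Lambda$-periodic, and you correctly isolate the one nontrivial point, namely transferring Haar measurability of $e^{if}$ to Lebesgue measurability of $e^{i\tilde f}$ on $I$, which your observation that $\pi$ restricted to the interior of $I$ is a measure-preserving Borel isomorphism onto a co-null subset of $\T^n$ (so Haar-null sets pull back to Lebesgue-null sets, and the boundary of $I$ is negligible) handles adequately. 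What your approach buys is economy: you avoid re-establishing Lemma \ref{lem:Torus} and the scaling formula on the torus, since all the measure-theoretic work was already done in $\R^n$. What the paper's intrinsic approach buys is independence from the covering structure, which fits its later remark that the argument should generalize to abstract regularity conditions and to settings where no convenient Euclidean cover is available. Both proofs are valid; yours is a legitimate and arguably cleaner alternative.
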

\begin{proof}
The proof is almost word for word as in the proof of Lemma \bref{lem:UniqueRn}. Here we use the periodicity of $f$ with respect to the  basis which spans the torus, and Lemma \bref{lem:Torus} instead of  Lemma \bref{lem:period2}. We note that we need the change of variables formula for scaling only for small enough (at most 1) positive rational $\alpha$, and in this case this formula is true for the topological torus.
\end{proof}

\section{Jensen's equation}\label{sec:Jensen}
This is the functional equation
\begin{equation}\label{eq:Jensen}
f\left(\frac{x+y}{2}\right)=\frac{f(x)+f(y)}{2}
\end{equation}
where $x$ and $y$ belong to, say, $\R^n$ or a subset of $\R^n$ and $f$ is a real-valued function.  
See \cite[pp. 43--48]{Aczel},\cite[pp. 242--243]{AczelDhombres},\cite[pp. 351--354]{Kuczma2009book} for more 
details and related references. 
The next theorem extends \cite[Theorem 1, p. 46]{Aczel},\cite[Proposition 3, p. 243]{AczelDhombres},\cite[Theorem 13.2.3, p. 354]{Kuczma2009book}.
\begin{thm}\label{thm:Jensen}
Let $S$ be a convex subset of $\R^n$ and assume that its interior is nonempty. 
Suppose that $f:S\to\R$ satisfies \beqref{eq:Jensen} for all $(x,y)\in S^2$.  
If $e^{if}$ is measurable on $S$, then there exist $c\in\R^n$ and $b\in\R$ 
such that $f(x)=c\cdot x+b$ for each $x\in S$. 
\end{thm}
\begin{proof}
Because $S$ is convex, $(x+y)/2\in S$, so \beqref{eq:Cauchy} is well defined. By \cite[Theorem 13.2.1, p. 353]{Kuczma2009book} there exists a function $g:\R^n\to\R$ satisfying \beqref{eq:Cauchy} for every $x,y\in\R^n$ and a constant $b\in \R$ such that $f(x)=g(x)+b$ for all $x\in S$. 
Since $e^{if}$ is measurable on $S$ and the interior of $S$ is nonempty, 
$e^{if}$ is measurable on a hypercube contained in $S$. Therefore $e^{ig}=e^{-ib}e^{if}$ 
is measurable on this hypercube, and by Theorem \bref{thm:CauchyMeasurable} 
there exists $c\in\R^n$ such that $g(x)=c\cdot x$ for each $x\in\R^n$. 
Thus $f(x)=c\cdot x+b$ for each $x\in S$, and an immediate check shows that 
this function does satisfy \beqref{eq:Jensen} for all $(x,y)\in S^2$. 
\end{proof}

\section{Cauchy's equation on restricted domains}\label{sec:Restricted}
Now we consider the Cauchy functional equation \beqref{eq:Cauchy} assuming $f$ is real-valued function defined on a subset 
of $\R^n$ (and hence $(x,y)$ should belong to a subset of $\R^{2n}$). 
Sometimes \beqref{eq:Cauchy} with these additional conditions is also 
referred to as ``conditional Cauchy's equation'' or ``the restricted Cauchy equation''. 
 More details about this (quite studied) issue can be found 
 in \cite[pp. 44--49]{Aczel},\cite[pp. 12, 18, 73, 84--92]{AczelDhombres}, 
 \cite{Dhombres1979},\cite[pp. 369--374]{Kuczma2009book}. 
 
In what follows we will derive a few theorems. The first one extends \cite[Corollary 9, p. 18]{AczelDhombres} in which $S=[0,\infty)$. It is based on a certain definition (see Remark \bref{rem:RestrictedCauchy} below for related examples) and several lemmas  
 (the first lemma generalizes the discussion in \cite[p. 12]{AczelDhombres} and is also related 
 to \cite{ABDKR1971}). 
  
\begin{defin}\label{def:StronglySpan}
Let $(G,+)$ be a group and let $S\subseteq G$. 
\begin{enumerate}[(a)]
\item $S$ is said to subtractively span $G$ 
if $G=S-S:=\{s-t: (s,t)\in S^2\}$. 
\item $S$ is said to strongly subtractively span $G$ if for 
all $x_1,x_2\in G$ there exist $s_1,s_2,t_1,t_2\in S$ 
such that $x_i=s_i-t_i$, $i=1,2$ and such that $s_1+s_2\in S$, $t_1+t_2\in S$.
\end{enumerate}
\end{defin}

\begin{lem}\label{lem:ExtensionHomomorphism}
Let $(G,+)$ and $(H,+)$ be two commutative groups. Suppose that 
$S\subseteq G$ strongly subtractively spans $G$. Let $S+S:=\{a+b: (a,b)\in S^2\}$. 
Let $A\subseteq G$ satisfy $S\cup(S+S)\subseteq A$. 
If $f:A\to H$ satisfies 
\begin{equation}\label{eq:CauchyRestricted}
f(x+y)=f(x)+f(y),\quad \forall (x,y)\in S^2,
\end{equation}
then there exists an additive homomorphism $F:G\to H$ which coincides with $f$ on $S$. 
\end{lem}

\begin{proof}
Since $S$ strongly subtractively spans $G$  it subtractively spans it as an immediate verification shows.
Hence, given $x\in G$, there exists $(s,t)\in S^2$ such that $x=s-t$. Define 
\begin{equation*}
F(x):=f(s)-f(t).
\end{equation*}
To see that $F$ is well defined, suppose that we also have $x=s'-t'$ for another pair  $(s',t')\in S^2$. Then 
$s+t'=s'+t$ and $f(s)+f(t')=f(s+t')=f(s'+t)=f(s')+f(t)$ because of \beqref{eq:CauchyRestricted}. 
It follows that $f(s)-f(t)=f(s')-f(t')$ and hence $F$ is indeed well defined. 
To see that $F$ coincides with $f$ on $S$ let $x\in S$ be given. As explained above, because $S$ 
subtractively spans $G$ there exists $(s,t)\in S^2$ such that $x=s-t$. 
By \beqref{eq:CauchyRestricted} it follows that $f(s)=f(x+t)=f(x)+f(t)$. 
This and the definition of $F$ imply the equality $F(x)=f(s)-f(t)=f(x)$. Finally, 
to see that $F$ is additive let $(x_1,x_2)\in G^2$ be arbitrary. 
Since $S$ strongly subtractively spans $G$ there exist $s_1,s_2,t_1,t_2,\in S$ which 
satisfy the relations  $x_1=s_1-t_1$, $x_2=s_2-t_2$, $s_1+s_2\in S, t_1+t_2\in S$. 
This and the commutativity of $G$ imply that $x_1+x_2=(s_1+s_2)-(t_1+t_2)\in S-S$. 
This, the definition of $F$, the commutativity of $H$, and \beqref{eq:CauchyRestricted} imply the required assertion: 
\begin{equation*}
F(x_1+x_2)=f(s_1+s_2)-f(t_1+t_2)=f(s_1)-f(t_1)+f(s_2)-f(t_2)=F(x_1)+F(x_2). 
\end{equation*}
\end{proof}

\begin{thm}\label{thm:RestrictedCauchy}
Suppose that $S\subseteq \R^n$ strongly subtractively spans $\R^n$. 
Let $A\subseteq \R^n$ satisfy $S\cup(S+S)\subseteq A$. 
Let $f:A\to\R$ and assume that $f$ satisfies \beqref{eq:CauchyRestricted}.
If $S$ contains a hypercube $I$ on which   $e^{if}$ is measurable, then 
there exists $c\in\R^n$ such that $f(x)=c\cdot x$ for each $x\in S$. 
\end{thm}
\begin{proof}
By Lemma \bref{lem:ExtensionHomomorphism} there exists an additive function 
$F:\R^n\to\R$ satisfying $F(x)=f(x)$ for all $x\in S$. 
Since $F$ is additive and $e^{iF}=e^{if}$ is measurable on $I$, it follows from 
Theorem \bref{thm:CauchyMeasurable} that there exists $c\in \R^n$ such 
that $F(x)=c\cdot x$ for all $x\in \R^n$. The assertion follows since 
$f(x)=F(x)$ for all $x\in S$. 
\end{proof}

\begin{remark}\label{rem:RestrictedCauchy}
Examples of subsets $S$ of $\R^n$ which strongly subtractively span $\R^n$ and containing a  
hypercube are orthants (with or without the boundary), 
halfspaces, and any set containing a subset which strongly subtractively spans $\R^n$ and has a nonempty 
interior. 
In addition, any semigroup $S\subseteq \R^n$ (that is, $S+S\subseteq S$) 
which subtractively spans $\R^n$  strongly subtractively spans $\R^n$. 
In particular, $S:=\bigcup_{m=1}^{\infty}[2m,3m]$ 
strongly subtractively spans $\R$. There are also examples of subsets $S$ with 
a nonempty interior which strongly subtractively span $\R^n$ but are not semigroups. A simple example is a translated copy of an orthant by a vector 
which does not belong to the orthant. But there are more exotic examples such as $S:=\bigcup_{m=1}^{\infty}S_m$ 
where $S_m:=[10^m,5\cdot 10^m)^n$ for each $m\in\N$ (because given $x,y\in \R^n$, let $2\leq m\in\N$ 
be such that $\|x\|+\|y\|<10^{m-1}$; let $p_m:=(2\cdot 10^{m})_{k=1}^n$ be the 
 vector whose components are $2\cdot 10^{m}$; 
 then $x=(p_m+x)-p_m\in S_m-S_m$, $y=(p_m+y)-p_m\in S_m-S_m$, and  $x+y=(2p_m+x+y)-2p_m\in S_m-S_m$ 
 as claimed). 
\end{remark}

The same reasoning of the proof of Theorem \bref{thm:RestrictedCauchy} 
can be used for easily extending it much further, as shown in the next 
theorem. 
\begin{thm}
Let $S\subseteq \R^n$. Let $A\subseteq \R^n$ satisfy $S\cup(S+S)\subseteq A$. 
Suppose that $f:A\to\R$ satisfies \beqref{eq:CauchyRestricted}. 
Suppose also that $S$ contains a hypercube $I$. If $e^{if}$ is measurable on $I$ and 
if there exists a (not necessarily unique) additive function $F:\R^n\to\R$ 
such that $F(x)=f(x)$ for all $x\in S$, then  there exists $c\in\R^n$ such that 
$f(x)=c\cdot x$ for each $x\in S$. 
\end{thm}
Now the theory developed in the literature can be used, so for instance, 
we may take $S$ to be  any interval in $\R$ having 0 in 
its closure \cite[Theorem 13.5.2, p. 368]{Kuczma2009book} and $A:=S+S$. We can also take $S$ to be a multidimensional ball having the origin as its center \cite[Corollary 13.6.3, p.  373]{Kuczma2009book} 
(and $A:=S+S$) but this actually follows from the next theorem (which generalizes 
the case of a continuous additive function defined on a compact interval \cite[pp. 45--46]{Aczel}). 
\begin{thm}\label{thm:RestrictedConvex}
Let $S\subseteq \R^n$ be a convex subset having a nonempty interior. 
Let $A\subseteq \R^n$ satisfy $S\cup(S+S)\subseteq A$. 
Suppose that $f:A\to\R$ satisfies \beqref{eq:CauchyRestricted}. 
 If $e^{if}$ is measurable on $S$, 
 then  there exists $c\in\R^n$ such that $f(x)=c\cdot x$ for each $x\in S$.
\end{thm}
\begin{proof}
Let $u\in S+S$ be arbitrary. Then $u=s+t$ for some $s,t\in S$ 
and hence $u/2=(s+t)/2\in S$ because $S$ is convex. By putting $x=y=u/2$ 
in \beqref{eq:CauchyRestricted} it follows that $f(u)/2=f(u/2)$ for 
all $u\in S+S$. In particular this is true for $u:=x+y$ where $x,y\in S$ are given. 
Thus $f((x+y)/2)=f(x+y)/2=(f(x)+f(y))/2$ where the last equality follows from \beqref{eq:CauchyRestricted}. 
Hence $f$ satisfies the Jensen equation \beqref{eq:Jensen} for all $(x,y)\in S^2$, 
and from Theorem \bref{thm:Jensen} it follows that there exist $c\in \R^n$ 
and $b\in\R$ such that $f(x)=c\cdot x+b$ 
for each $x\in S$. By plugging this expression in \beqref{eq:CauchyRestricted} 
it follows that $b=0$. 
\end{proof}

\section{The multiplicative Cauchy equation}\label{sec:Multiplicative}
This is the functional equation 
\begin{equation}\label{eq:CauchyMult}
f(x+y)=f(x)f(y) 
\end{equation}
where the pairs $(x,y)$ belong to a subset of $\R^{2n}$ and $f$ is a real-valued function. 
See \cite[pp. 37--39]{Aczel},\cite[pp. 28--29]{AczelDhombres},\cite[pp. 343-344, 349--350]{Kuczma2009book} for related 
details and related references. 
An obvious solution to \beqref{eq:CauchyMult} is $f\equiv 0$. 
As is well known, when the whole space is considered, if $f(x_0)=0$ at some point $x_0\in\R^n$, 
then $f(x)=f(x-x_0)f(x_0)=0$ for each $x\in\R^n$. Since in addition $f(x)=(f(x/2))^2$ for each $x\in \R^n$, 
it follows that when $f$ is not the constant zero function, then $f(x)>0$ for all $x$. 
However, when the pairs $(x,y)$ belong only to a subset of $\R^{2n}$, then  
it is not clear why a solution to \beqref{eq:CauchyMult} must be positive. Hence, in the next 
theorem (which extends  \cite[Theorem 1, pp. 38--39]{Aczel},
\cite[Theorem 5, p. 29]{AczelDhombres},
\cite[Theorem 13.1.4, p. 349]{Kuczma2009book}, \cite[Theorem 13.1.7, p. 350]{Kuczma2009book}) 
the positivity of $f$ is assumed in advance. 
\begin{thm}
Let $S\subseteq \R^n$. Let $A\subseteq \R^n$ satisfy $S\cup(S+S)\subseteq A$. 
Suppose that $f:A\to\R$ is a positive function satisfying \beqref{eq:CauchyMult} for all $(x,y)\in S^2$. 
Assume that either $S$ is a convex subset having a nonempty interior and $f^i$ is measurable on $S$, 
or that $S$ strongly subtractively spans $\R^n$ and it contains a hypercube $I$ on which $f^i$ is 
measurable. Then there exists $c\in\R^n$ such that $f(x)=e^{c\cdot x}$ for each $x\in S$. 
\end{thm}
\begin{proof}
Since $f$ is positive, the function $g:A\to\R$ defined by $g:=\ln(f)$ is well-defined on $A$ and by taking logarithm on 
\beqref{eq:CauchyMult} we see that $g$ is additive. Since $f^i=\exp(i\ln(f))=e^{ig}$ is assumed 
to be measurable on either $S$ or $I$, Theorem \bref{thm:RestrictedConvex} or Theorem \bref{thm:RestrictedCauchy} respectively imply  the existence of $c\in\R^n$ 
such that $g(x)=c\cdot x$ for each $x\in \R^n$ and hence $f(x)=e^{c\cdot x}$ for all $x\in S$. This $f$ indeed 
satisfies \beqref{eq:CauchyMult} and the assertion follows. 
\end{proof}

\section{An alternative Cauchy's equation}\label{sec:Alternative}
This is the functional equation
\begin{equation}\label{eq:Alternative}
(f(x+y))^2=(f(x)+f(y))^2 
\end{equation}
where $x,y\in\R^n$ are arbitrary and $f$ is a real-valued function. 
It seems to appear first in the context of certain problems related to physics \cite{Hosszu1964}. 
See  \cite{FischerMuszely1967},\cite{Hosszu1963},\cite{Kuczma1978},\cite[pp. 380--382]{Kuczma2009book} 
for more details and related references. 
Obviously, any solution to the Cauchy functional equation \beqref{eq:Cauchy} 
solves \beqref{eq:Alternative}, but the converse is less trivial 
because at least at first glance one may only deduce the relation 
 $f(x+y)=\pm(f(x)+f(y))$ (where the sign depends on the pair $(x,y)$). However, as follows from  \cite[Theorem 13.9.2, p. 382]{Kuczma2009book},  
 any solution to  \beqref{eq:Alternative} defined on a semigroup $S$ of $\R^n$ must be additive. 
This, Theorem \bref{thm:RestrictedCauchy}, and Remark \bref{rem:RestrictedCauchy}, 
imply the following theorem.
\begin{thm}
Suppose that $S$ is a semigroup of $\R^n$ which subtractively spans $\R^n$. Assume that 
$f:S\to\R$ satisfies \beqref{eq:Alternative} for every $x,y\in S$. If there exists a hypercube $I\subset S$ on which 
$e^{if}$ is measurable, then there exist $c\in\R^n$ 
such that $f(x)=c\cdot x$ for each $x\in S$. 
\end{thm}

\section{Pexider's equation}\label{sec:Pexider}
This is a functional equation generalizing the Cauchy functional equation and 
involving three unknown functions:
\begin{equation}\label{eq:Pexider}
f(x+y)=g(x)+h(y) 
\end{equation}
where $(x,y)$ belongs to  $\R^{2n}$ or a subset of $\R^{2n}$ and $f$,$g$, and $h$ are all real-valued functions. 
See \cite[pp. 141--142]{Aczel},\cite[pp. 42--43]{AczelDhombres},\cite[pp. 355--363]{Kuczma2009book},\cite{Pexider1903}
for more details and additional references. 

The next theorem extends well-known results regarding the solvability of \beqref{eq:Pexider}  (see, e.g., \cite[Corollary, p. 142]{Aczel}, \cite[Theorem 13.3.9, p. 363]{Kuczma2009book}). 
\begin{thm}\label{thm:Pexider}
Let $S\subseteq \R^n$ be a semigroup satisfying $0\in S$. Assume that $f:S\to\R$, $g:S\to\R$, $h:S\to\R$ 
satisfy \beqref{eq:Pexider} for all $(x,y)\in S^2$. Suppose that $S$ subtractively spans 
$\R^n$ and it contains a hypercube $I$ on which a complex exponent of one of 
the given functions is measurable. 
Then there exist $c\in \R^n$ and constants $a,b\in\R$ such that $f(x)=c\cdot x+a+b$, $g(x)=c\cdot x+a$, 
and $h(x)=c\cdot x+b$ for all $x\in S$.
\end{thm}
\begin{proof}
Let $a:=g(0)$, $b:=h(0)$ and let $p:S\to\R$ be defined by $p(x):=f(x)-a-b$ for all $x\in S$. 
An immediate verification shows that 
$f(x)=p(x)+a+b$, $g(x)=p(x)+a$, $h(x)=p(x)+b$ for all $x\in S$ 
and that $p$ is additive. 
Since a complex exponent of one of the functions $f$, $g$, or $h$ is assumed to 
be measurable on a certain hypercube, the above expressions imply that $e^{ip}$ 
is measurable on this hypercube. Since a semigroup which subtractively spans a commutative group actually strongly 
subtractively spans it (this is an immediate consequence of Definition \bref{def:StronglySpan}) and since $S+S\subseteq S$, Theorem \bref{thm:RestrictedCauchy} (with $A:=S$) implies that there exists $c\in\R^n$ 
such that $p(x)=c\cdot x$ 
for each $x\in S$. The assertion follows after easily checking that the 
obtained triplet $(f,g,h)$ does solve \beqref{eq:Pexider}. 
\end{proof}

\section{Stability}\label{sec:Stability} Consider the following approximate version of \beqref{eq:Cauchy}: 
\begin{equation}\label{eq:Stability}
|f(x+y)-f(x)-f(y)|\leq\epsilon 
\end{equation}
for all $x,y\in \R^n$ where $f$ is a real-valued function. Here $\epsilon$ is a given positive number. A function 
$f:\R^n\to\R$ satisfying \beqref{eq:Stability} is called an approximately additive function 
or an $\epsilon$-additive function. Inequality \beqref{eq:Stability} is a perturbed 
version of \beqref{eq:Cauchy} and one may be interested in knowing whether any 
$\epsilon$-additive function is a small perturbation of a pure additive function. 
This issue, which was raised by Ulam in 1940 \cite[p. 222]{Hyers1941}, \cite[p. 64]{Ulam1960} (and traces of it can be found in the 1925 book of P\'olya and Szeg\"o \cite[Problem 99, p. 17]{PolyaSzego1925book}), is, in some sense, related to the question 
of the stability of solutions to differential equations. Anyhow, it was proved by 
Hyers \cite{Hyers1941} 
 that the answer to the above question is positive: there exists 
 a unique function $g:\R^n\to\R$ satisfying \beqref{eq:Cauchy} 
and $|f(x)-g(x)|\leq\epsilon$ for all $x\in \R^n$. 
The additive function $g$ satisfies 
\begin{equation}\label{eq:Lim_g_fm}
g(x)=\lim_{m\to\infty}\frac{f(mx)}{m} 
\end{equation}
for all $x\in \R^n$ (a proof can be found  in  \cite[p. 484]{Kuczma2009book}; the 
more common expression is $g(x)=\lim_{m\to\infty}f(2^m x)/2^m$). 
As a matter of fact, Hyers proved his theorem for functions acting between Banach spaces. As is well-known, 
 and follows e.g., from the proof of  \cite[Theorem 17.1.1, pp. 483--484]{Kuczma2009book} 
(which is a minor variation of Hyers' proof), the above holds 
whenever $f:S\to X$ satisfies \beqref{eq:Stability} for all $x,y\in S$ where $X$ is a Banach space and $S$ is a semigroup. 

The issue of stability of functional equations (and in particular, of the Cauchy 
functional equation) and various problems related 
to it have become quite popular during the 
last decades. This is somewhat illustrated in the following very short (and semi random) list of related works: \cite{Aoki1950,ArriolaBeyer2005-6,Badora2000,BGP2003jour,Bourgin1951,Chung2012,DalesMoslehian2007jour,Gajda1988,Ger1993jour,
GerSikorska1997,MaligrandaAM2008,MihetRadu2008jour,MirmostafaeeMoslehian2009jour,MoslehianRassias2007jour,NajatiRahimi2008,Paneah2009,Rhassias1978,Tabors2008}. 
A few reviews related to the issue of stability are  \cite{Forti1995,HIR1998,HyersRassias1992,Szekelyhidi2000incol}. 
Here  we will concentrate only on extending \cite[Theorem 17.1.2, p. 485]{Kuczma2009book}, saying that under 
familiar regularity conditions (e.g., $f$ is measurable or bounded above on a set 
of positive measure) the associated additive function $g$ must be continuous. In a sense, this result 
is related to the stability notion involving integrability conditions mentioned in \cite{ABR1995,Elliott1984} 
because of the measurability condition which is involved. However, the results and proofs mentioned 
there (in particular, the stability condition on $f$) are different from the one given below and also the setting is different (the functions are from the positive real line to itself). 

\begin{thm}\label{thm:Stability}
Let $S\subseteq \R^n$ be a semigroup which subtractively spans $\R^n$ and containing 
a hypercube $I$. If $f:S\to\R$ satisfies \beqref{eq:Stability} for all $x,y\in S$ and if there exists an infinite sequence $(m_k)_{k=1}^{\infty}$ of natural numbers such that each of the functions  $h_k:S\to\C$ defined by $h_k(x):=e^{if(m_kx)/m_k}$ for every $x\in S$  is measurable on $I$, then there 
exists $c\in\R^n$ such that $|f(x)-c\cdot x|\leq \epsilon$ for all $x\in S$.  
\end{thm}
\begin{proof}
 As mentioned near \beqref{eq:Lim_g_fm}, there exists an additive function $g:S\to\R$ satisfying  $|f(x)-g(x)|\leq \epsilon$ for all $x\in S$. 
From \beqref{eq:Lim_g_fm}  we have $g(x)=\lim_{k\to\infty}f(m_k x)/m_k$ 
for all $k\in \N$ and $x\in S$. Of course, $f(mx)$ is well-defined for all $x\in S$ and $m\in \N$ because $S+S\subseteq S$. By the continuity of the exponential function we have 
\begin{equation*}
h(x):=\exp(ig(x))=\lim_{k\to\infty}\exp(if(m_k x)/m_k)=\lim_{k\to\infty}h_k(x)  
\end{equation*}
for all $x\in S$. Thus the restriction of $h$ to $I$ is a pointwise limit of measurable 
functions and hence measurable \cite[p. 15]{Rudin1987book}. Since $g$ satisfies \beqref{eq:Cauchy} for every $x,y\in S$  
 and since a semigroup which subtractively spans a commutative group actually strongly 
subtractively spans it (as follows from Definition \bref{def:StronglySpan}), we conclude from 
Theorem \bref{thm:RestrictedCauchy} (with $A:=S$ and with $g$ instead of $f$) the existence of $c\in\R^n$ such that $g(x)=c\cdot x$ for all $x\in S$ 
 and the assertion follows.  
\end{proof}

\begin{remark}\label{rem:exp_fm}
If $f$ is locally measurable, then $x\mapsto e^{if(mx)/m}$ is locally measurable for all $m\in\N$ and hence the condition on the functions $h_k$ from Theorem \bref{thm:Stability} is satisfied and therefore $g$ is measurable. On the other hand, if we only know that $e^{if}$ is measurable, then it is not necessarily true that $g$ is measurable. Indeed, consider as in Remark \bref{rem:e^if} 
the function $f(x):=2\pi[g(x)]$, $x\in\R$ where $g$ is an arbitrary nonlinear solution of \beqref{eq:Cauchy}. 
Then $e^{if}$ is even continuous since it equals the constant function $1$. Since $|t-[t]|\leq 1$ for all $t\in\R$ we have $|f(x)-(2\pi)g(x)|\leq 2\pi$ for each $x\in \R$. This inequality, the  triangle inequality, and the additivity of $g$,  imply that $f$ satisfies \beqref{eq:Stability} for every $x,y\in\R$ with $\epsilon:=6\pi$. Since $2\pi g$ satisfies \beqref{eq:Cauchy} for every $x,y\in\R$, 
Hyers' theorem implies that this is the unique additive function which $\epsilon$-approximates $f$ 
and also that $2\pi g(x)=\lim_{m\to\infty}f(mx)/m$ for all $x\in \R$ (this limit can also be 
 computed directly). But $g$ is non-measurable. We can actually say more:  
although $e^{if}$ is measurable, Theorem \bref{thm:Stability} 
implies that among the infinitely many functions $h_m(x):=e^{if(mx)/m}$, $x\in\R^n$, $m\in\N$, only finitely many of them can be measurable. 
\end{remark}

\section{Concluding Remarks}\label{sec:Remarks}
We conclude this paper with the following remarks. 
\begin{remark} {\bf Systems of equations:} 
Theorem \bref{thm:CauchyMeasurable} and other results regarding variations 
of the Cauchy equation \beqref{eq:Cauchy} can be generalized to systems of equations. For instance, 
given $m,n\in \N$ and a function $f:\R^n\to \R^m$ satisfying \beqref{eq:Cauchy} for all $x,y\in\R^n$, if $f=(f_1,\ldots,f_m)$ and $e^{if_k}$ is (locally) measurable for each $k\in\{1,\ldots,m\}$, then there exists an $m\times n$ matrix $C$ having real entries such that  $f(x)=Cx$ for all $x\in\R^n$. This is a simple consequence of Theorem \bref{thm:CauchyMeasurable}, because $f_k:\R^n\to\R$ satisfies \beqref{eq:Cauchy} for every $x,y\in\R^n$ and each $k$. 
\end{remark}
\begin{remark}
{\bf Infinite dimensional spaces:} It is interesting whether it is possible to generalize the results of this paper  to infinite dimensional spaces on which a measure can be defined, e.g., the ones described in \cite{Daoxing1972}, 
\cite[pp. 154--160]{Halmos1974},\cite[pp. 157--166]{Henstock1991},\cite{Yamasaki1985}. 
\end{remark}

\begin{remark}{\bf An abstract regularity condition:} \label{rem:Abstract}
An examination of the proof of Theorem \bref{thm:CauchyMeasurable} using the initial value 
approach presented in Section \bref{sec:IVP} suggests that the regularity condition can be further generalized by replacing the measure theoretic aspect in it by a more abstract one.  In what follows, we say that a triplet $(A,B,F)$ has  the periodic integral property if the following conditions hold: 
 
\begin{enumerate}[(i)]
\item $A$ is a set of real functions defined on $\R^n$;
\item $B$ is a set of functions from $\R^n$ to $\C$ and $B$ contains $\{e^{ig}: g\in A\}$;
\item $F:B\to\C$ is a functional;
\item for all $\beta\in \C$ satisfying $|\beta|=1$ and for all $h\in B$, we have $\beta h\in B$ and $F(\beta h)=\beta F(h)$;
\item the functions $x\mapsto c\cdot x$ are in $A$ for all $c\in \R^n$;
\item\label{item:VectorSpace} the set $A$ is closed under addition and under multiplication by positive rationals;
\item\label{item:F(g_y)=F(g)} there exists a basis $\{u_1,\ldots,u_n\}$ of $\R^n$ such that for all $g\in A$ satisfying the relation  $g(x+u_k)=g(x)$ for all $x\in \R^n$ and $k\in \{1,\ldots,n\}$, the function $g_y$  defined by $g_y(x):=g(x+y)$ for each $x\in \R^n$ is in $A$ for each $y\in \R^n$ 
and we have $F(e^{i g_y})=F(e^{i g})$; 
\item\label{item:alpha} for each $g\in A$ there exists some rational $\alpha>0$ such that $F(e^{i \alpha g})\neq 0$.
\end{enumerate}
As follows from Section \bref{sec:IVP}, an example of such a triplet $(A,B,F)$ is $A:=\{f:\R^n\to\R: e^{if}\,\,\textnormal{is  measurable}\}$, $B:=\{e^{ig}: g\in A\}$, $F(v):=\int_I v(x)dx$ for all $v\in A$, where $I\subset \R^n$ is an arbitrary hypercube which is spanned by the basis $\{u_1,\ldots,u_n\}$. 

Suppose now that we are given such a triplet $(A,B,F)$ satisfying the periodic integral property. We claim that in this case, if $f\in A$ and if $f$ satisfies \beqref{eq:Cauchy} for every $x,y\in\R^n$, then there exists $c\in\R^n$ such that  $f(x)=c\cdot x$ for each $x\in\R^n$. Indeed, the proof is almost word for word the proof given in Section \bref{sec:IVP}, where there are two main differences. First, in Lemma \bref{lem:UniqueRn} we need to follow only the first two paragraphs. Second,   after applying the function $t\mapsto e^{i\alpha t}$, $t\in\R$ on \beqref{eq:CauchyThm_g} (where $\alpha$ is the positive rational number from Property \beqref{item:alpha} above) we apply the functional $F$ on  both sides of the resulting equation $e^{i\alpha g(x+y)}=e^{i\alpha g(x)}e^{i\alpha g(y)}$  and use Property \beqref{item:F(g_y)=F(g)} above instead of applying the integral operator  $G(v):=\int_{I_0} v(x)dx$ on this equation (where $I_0$ is the hyper-parallelepiped spanned by the basis $\{u_1,\ldots,u_n\}$ and  defined in the proof of Lemma \bref{lem:UniqueRn}, near \beqref{eq:h_alpha}) and using Lemma \bref{lem:period2}, as done there.  

It is very interesting to find a triplet $(A,B,F)$ satisfying the periodic integral property such that  $A\neq \{f:\R^n\to\R: e^{if}\,\,\textnormal{is  measurable}\}$ and $F$ is not an integral operator (that is, $F$ does not coincide with $v\mapsto\int_{I_0} v(x)dx$ or a minor modification of this operator) or to show that this is impossible. It is of a considerable interest to find at least one triplet $(A,B,F)$  satisfying the periodic integral property in an infinite dimensional setting or to show that no such a triplet can exist. We may also try to replace $F$ by a family of functions $F_j:B\to \C$. Anyway, because there are nonlinear solutions to \beqref{eq:Cauchy}, these solutions cannot belong to $A$. Thus no matter what $A$ and $F$ are, $A$ cannot be the set of all real functions. A corresponding modification of what written in the previous lines and paragraphs holds for the topological torus.  Finally, it will be valuable to combine the above mentioned abstract regularity condition with  some existing theories of regularity conditions of functional equations, such as the 
ones surveyed and developed in  \cite{Jarai,JaraiSzekelyhidi1996jour}.
\end{remark}

\section{Appendix}\label{sec:Appendix}
\begin{proof}[{\bf Proof of Lemma \bref{lem:period2}}]
The proof is not conceptually difficult but it is somewhat technical. The main ideas behind it are illustrated in Figure \bref{fig:ParallelPeriod} below. For the sake of convenience, the proof is divided into steps. In what follows $y\in\R^n$ is fixed. \\

{\noindent \bf Step 0:} It is sufficient to assume that $I$ is right semi-open because the (right) facets of $I$ (i.e., the sets $I_j:=\{u_0+u_j+\sum_{k\in\{1,\ldots,n\}\backslash\{j\}} t_k u_k:\,\,t_k\in [0,1]\,\,\forall k\in\{1,\ldots,n\}\backslash\{j\}\}$ where $j\in\{1,\ldots,n\}$) are of measure 0 and hence adding them to or removing them from $I$ contributes nothing to the considered integrals. Moreover, it suffices to assume that $u_0=0$, because if we know that  the assertion holds for $u_0=0$ (equivalently, with the integration done over $I(0):=\{\sum_{k=1}^n t_k u_k:\,\,(t_k)_{k=1}^n\in [0,1)^n\}$) and arbitrary $y$, then a few applications of this assertion (below: first with $y+u_0$ and second with $u_0$, both instead of $y$) together with a few applications of the  change of variables formula for translations, imply the desired conclusion: 
\begin{multline*} \int_{I}h_y(x)dx=\int_{I}h(x+y)dx=\int_{I(0)+u_0}h(x+y)dx
=\int_{I(0)}h(w+u_0+y)dw\\
=\int_{I(0)}h_{u_0+y}(w)dw=\int_{I(0)}h(w)dw=\int_{I(0)}h_{u_0}(w)dw\\
=\int_{I(0)}h(w+u_0)dw=\int_{I(0)+u_0}h(x)dx=\int_{I}h(x)dx.
\end{multline*}

{\noindent \bf Step 1:} Since $U$ is a basis, we have $y=\sum_{k=1}^n y_k u_k$ where $y_k\in \R$ for each $k\in\{1,\ldots,n\}$ are uniquely determined. Given $z\in I_y:=I+y$, we have $z=x+y$ for some $x\in I$. We can write $x=\sum_{k=1}^n x_k u_k$ and $z=\sum_{k=1}^n z_k u_k$ where (because $u_0=0$ by our assumption)  $x_k\in [0,1]$ and $z_k\in\R$ are uniquely determined for all $k\in\{1,\ldots,n\}$. Suppose first that $y_k\in [0,1)$ for each $k\in\{1,\ldots,n\}$. The general case is a simple consequence of this special case (Step 9  below).\\

\begin{figure}[t]
\begin{minipage}[t]{1\textwidth}
\begin{center}
{\includegraphics[clip, scale=0.55]{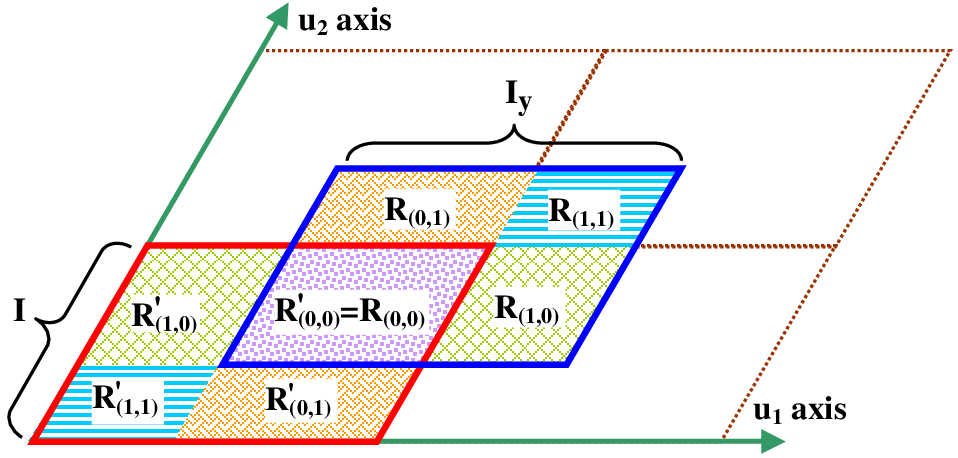}}
\end{center}
 \caption{A schematic description of the proof of Lemma \bref{lem:period2} for $n=2$. Both $I_y$ and $I$ are decomposed into unions of disjoint sub-parallelepipeds: $I_y=\cup_{s\in\{0,1\}^n}R_s$ and $I=\cup_{s\in\{0,1\}^n}R'_s$. The $U$-periodicity of $h$ and the change of variable formula for translations imply that  $\int_{R_s}h(z)dz=\int_{R'_s}h(x)dx$ for all $s\in\{0,1\}^n$. Hence (using the additivity of the integral) $\int_{I}h_y(x)dx=\int_{I_y}h(z)dz=\int_I h(x)dx$.}
\label{fig:ParallelPeriod}
\end{minipage}
\end{figure}
 
{\noindent \bf Step 2:}  Since $z_k=x_k+y_k$, it follows that $z_k\in [0,2)$ and hence either $z_k\in [0,1)$ or $z_k\in [1,2)$ for all $k\in\{1,\ldots,n\}$. Consider the greatest integer function $[\cdot]:\R\to\Z$ which assigns to each $t\in \R$ the greatest integer $[t]$ not exceeding $t$. In particular, $[t]=0$ when $t\in [0,1)$ and $[t]=1$ when $t\in [1,2)$. Define $\phi: I_y\to\{0,1\}^n$ by $\phi(z):=([z_k])_{k=1}^n$ for every $z\in I_y$, where here we used the coordinates of $z$ with respect to the basis $U$. For each $s\in \{0,1\}^n$ let $R_s:=\phi^{-1}(s)$. Since  $\phi$ is a measurable function, $R_s$ is measurable for every $s\in \{0,1\}^n$. In fact, each $R_s$ is a right semi-open hyper-parallelepiped, possibly empty (if $y_k=0$ for some $k\in\{1,\ldots,n\}$, then $R_s=\emptyset$ for all $s\in\{0,1\}^n$ whose $k$-th component is 1). For instance, if $n=2$, then 
$R_{(0,0)}=\{z_1 u_1+z_2 u_2: z_1\in [y_1,1),\,z_2\in [y_2,1)\}$, 
$R_{(0,1)}=\{z_1 u_1+z_2 u_2: z_1\in [y_1,1),\,z_2\in [1,1+y_2)\}$, 
$R_{(1,0)}=\{z_1 u_1+z_2 u_2: z_1\in [1,1+y_1),\,z_2\in [y_2,1)\}$, 
$R_{(1,1)}=\{z_1 u_1+z_2 u_2: z_1\in [1,1+y_1),\,z_2\in [1,1+y_2)\}$. See Figure \bref{fig:ParallelPeriod}.\\

{\noindent \bf Step 3:} The above construction implies that $I_y=\cup_{s\in \{0,1\}^n}R_s$ and $R_s\cap R_{p}=\emptyset$ whenever $s\neq p$, $s,p\in\{0,1\}^n$. Now we associate with each $R_s$   a certain translated copy of it which we denote by $R'_s$. The correspondence is as follows: given $s\in \{0,1\}^n$, let $R'_s:=R_s-s\cdot U$ where $s\cdot U:=\sum_{k=1}^n s_k u_k\in\R^n$. In the next steps we will see that the sets $R'_s$ form a disjoint decomposition of $I$.\\

{\noindent \bf Step 4:} We first show that $R'_s$ is contained in $I$ for each $s\in\{0,1\}^n$. Indeed, given $x\in R'_s$ we have $x=z-s\cdot U$ for some $z\in R_s$. By the definition of $R_s$ we have $[z_k]=s_k$ for all $k\in\{1,\ldots,n\}$. Fix an index $k\in\{1,\ldots,n\}$. If $s_k=0$, then according to the definition of $[\cdot]$ we have $z_k\in [0,1)$. By comparing coordinates with respect to the basis $U$ we see that $x_k=z_k-s_k=z_k\in [0,1)$. If $s_k=1$, then according to the definition of $[\cdot]$ we have $z_k\in [1,2)$. Thus  again $x_k=z_k-s_k\in [0,1)$. Because $k$ was arbitrary we conclude that indeed $x\in I$. \\

{\noindent \bf Step 5:} Here we show that $R'_s\cap R'_p=\emptyset$ whenever $s,p\in\{0,1\}^n$, $s\neq p$. Indeed, suppose for a contradiction that there exists $x\in R'_s\cap R'_p$ for two different points $p$ and $s$ in $\{0,1\}^n$. Since $s\neq p$ there exists an index $k\in\{1,\ldots,n\}$ such that $s_k\neq p_k$. Because $x\in R'_s$ we have $x=z-s\cdot U$ for some $z\in R_s$. Because $x\in R'_p$ we have $x=\tilde{z}-p\cdot U$ for some $\tilde{z}\in R_p$. In particular $z_k-s_k=x_k=\tilde{z}_k-p_k$. By the definition $R_s$ and $R_p$ we have $[z_k]=s_k$ and  $[\tilde{z}_k]=p_k$. Suppose first that $s_k=0$. Then  $x_k=z_k-0$. In addition, because $z\in R_s\subseteq I_y$ there is $v\in I$ such that $z=v+y$. Since $v\in I$ we can write $v=\sum_{j=1}^n v_j u_j$ where $v_j\in [0,1)$ for all $j\in\{1\ldots,n\}$. Hence $v_k\geq 0$ and 
\begin{equation}\label{eq:x_k>=y_k}
x_k=z_k=v_k+y_k\geq y_k. 
\end{equation}
Now, since $p_k\neq s_k$ we have $p_k=1$. This and the fact that $\tilde{z}\in R_p$  imply, according to the definition of $[\cdot]$, that $\tilde{z}_k\in [1,2)$. In addition, because $\tilde{z}\in R_p\subseteq I_y$ there is $\tilde{v}\in I$ such that $\tilde{z}=\tilde{v}+y$. Since $\tilde{v}\in I$ we can write $\tilde{v}=\sum_{j=1}^n \tilde{v}_j u_j$ where $\tilde{v}_j\in [0,1)$ for each $j\in\{1,\ldots,n\}$. Thus $\tilde{v}_k<1$ and $\tilde{z}_k<1+y_k$. Hence  $x_k=\tilde{z}_k-p_k<1+y_k-1=y_k$. This is a contradiction to \beqref{eq:x_k>=y_k}. The case $s_k=1$, namely $p_k=0$, is treated similarly and yields a similar contradiction. We conclude that  $R'_s\cap R'_p=\emptyset$, as claimed. \\

{\noindent \bf Step 6:} Here we show that $\cup_{s\in\{0,1\}^n}R'_s=I$. Indeed,  $\cup_{s\in\{0,1\}^n}R'_s\subseteq I$ by Step 4. On the other hand, let $x\in I$ be arbitrary. Then $x_k\in [0,1)$ for all $k\in \{1,\ldots,n\}$. Define $s=s(x)\in \{0,1\}^n$ as follows: given $k\in\{1,\ldots,n\}$, if $x_k\in [0,y_k)$, then  $s_k:=1$. If $x_k\in [y_k,1)$, then $s_k:=0$. We claim that $x\in R'_s$. Since $R'_s=R_s-s\cdot U$ it suffices to prove that $x+s\cdot U\in R_s$. To see this we recall that by the definition of $R_s$ and of $[\cdot]$ (Step 2) we need to show that $[x_k+s_k]=s_k$ for each $k\in\{1,\ldots,n\}$. Indeed, given $k\in\{1,\ldots,n\}$, if $x_k\in [0,y_k)$, then $s_k=1$ and hence $1\leq x_k+s_k<y_k+1<2$, so the definition of $[\cdot]$ implies that $[x_k+s_k]=1=s_k$. If $x_k\in [y_k,1)$, then $s_k=0$. Hence $x_k+s_k\in [y_k,1)\subseteq [0,1)$ and therefore $[x_k+s_k]=0=s_k$ again.\\

{\noindent \bf Step 7:} A useful observation: $h(z)=h(z-s\cdot U)$ for all $z\in \R^n$. Indeed, this is just an immediate consequence of the periodicity of $h$ with respect to the basis $U$.\\

{\noindent \bf Step 8:} Here we show that the desired integrals are equal when $y_k\in [0,1)$ for each $k\in\{1,\ldots,n\}$. First we observe that since $h$ is defined on $\R^n$ and integrable on $I$, the function $1_I h$ is integrable on $\R^n$. Hence the change of variables formula for  translations \cite[p. 171]{Jones2001book} (applied to $\re(h)$ and $\im(h)$ and hence to $h=\re(h)+i\im(h)$) implies that given $w\in \R^n$ the function  $x\mapsto 1_I(x-w)h(x-w)$, $x\in \R^n$, is integrable on $\R^n$. Let $w=\sum_{k=1}^n m_ku_k$ where $m_k$ is an integer for all $k\in\{1\ldots,n\}$. The periodicity of $h$ with respect to $U$ implies that $h(z-w)=h(z)$ for all $z\in\R^n$. Hence by the change of variables formula for translations 
\begin{multline*} 
\int_{I}h(x)dx=\int_{\R^n}1_I(x)h(x)dx=\int_{\R^n}1_I(z-w)h(z-w)dz\\
=\int_{I_w}h(z-w)dz=\int_{I_w}h(z)dz. 
\end{multline*}
Therefore $h$ is integrable (in particular, measurable) over any translated copy of $I$ where the translation vector is a linear combination of the elements in $U$ with integer coefficients. Since $\R^n$ can be represented as a countable union of translated copies of $I$ it follows that $h$ is measurable on $\R^n$ and (because of the additivity of the integral) integrable on any measurable subset of $\R^n$ contained in a finite union of translated copies of $I$. Because $I_y$ is a measurable set  contained in the union $\bigcup_{s\in \{0,1\}^n}(I+s\cdot U)$ of $2^n$ translated copies of $I$, it follows that $h$ is integrable on $I_y$. Now, by  combining this with the previous steps, with the additivity of the integral, with the periodicity of $h$ with respect to $U$, and with the change of variables formula for translations, we have 
\begin{multline*}
\int_{I}h_y(x)dx=\int_{I}h(x+y)dx=\int_{I_y}h(z)dz=\int_{\bigcup_{s\in\{0,1\}^n}R_s}h(z)dz\\
=\sum_{s\in\{0,1\}^n}\int_{R_s}h(z)dz=\sum_{s\in\{0,1\}^n}\int_{R_s}h(z-s\cdot U)dz
=\sum_{s\in\{0,1\}^n}\int_{R_s-s\cdot U}h(v)dv\\
=\sum_{s\in\{0,1\}^n}\int_{R'_s}h(v)dv=\int_{I}h(v)dv=\int_I h(x)dx.
\end{multline*}

{\noindent \bf Step 9:} Here we finish the proof by considering the case of an arbitrary $y\in\R^n$. Using the greatest integer function we can write  $y=\sum_{k=1}^n y_k u_k=\sum_{k=1}^n ([y_k]+\hat{y}_k)u_k$, where $\hat{y}_k=y_k-[y_k]\in [0,1)$ for all $k\in \{1,\ldots,n\}$. Hence, by denoting $\hat{y}:=\sum_{k=1}^n\hat{y}_k u_k$ and $[y]:=\sum_{k=1}^n[y_k] u_k$ and using the fact that $h$ is periodic with respect to the basis $U$ we have $h(x+y)=h(x+\hat{y}+[y])=h(x+\hat{y})$ for each $x\in\R^n$. Thus $h_y=h_{\hat{y}}$ and hence, from Step 8 (with $\hat{y}$ instead of $y$), we conclude that $h_y$ is integrable over $I$ and $\int_{I}h_y(x)dx=\int_{I}h_{\hat{y}}(x)dx=\int_{I}h(x)dx$. 
\end{proof}

\begin{proof}[{\bf Proof of Lemma \bref{lem:Torus}}]
Suppose first that $h$ is a characteristic function of some measurable subset $S$ of $\T^n$, namely $h=1_S$. Given $x\in\T^n$ we have $x+y\in S$ if and only if $x\in S-y:=\{s-y: s\in S\}$ for each $x\in \T^n$. Thus $h_y(x)=h(x+y)=1_S(x+y)=1_{S-y}(x)$ for all $x\in\T^n$. Therefore  $h_y$ is measurable and  $\int_{\T^n}h_y(x)d\mu(x)=\int_{\T^n}1_{S-y}(x)d\mu(x)=\mu(S-y)$. This equality and the fact that the Haar measure $\mu$ is  invariant under translations \cite[p. 285]{Cohn2013book} imply that  $\mu(S-y)=\mu(S)=\int_{\T^n}1_S(x)d\mu(x)=\int_{\T^n}h(x)d\mu(x)$. Hence $\int_{\T^n}h_y(x)d\mu(x)=\int_{\T^n}h(x)d\mu(x)$.

Suppose now that $h$ is a simple real valued function, namely, there exists $m\in\N$, mutually disjoint  measurable subsets $S_1,\ldots,S_m$ of $\T^n$, and real numbers $\beta_1,\ldots,\beta_m$ such that $h=\sum_{k=1}^m \beta_k 1_{S_k}$. From the previous paragraph $h(x+y)=\sum_{k=1}^m \beta_k 1_{S_k}(x+y)=\sum_{k=1}^m \beta_k 1_{S_k-y}(x)$ for all $x,y\in\T^n$. Hence for every $y\in\T^n$ the function $h_y$ is a finite sum of measurable functions and therefore it is measurable. Moreover, the  linearity of the integral and its translation invariance  with respect to characteristic functions established in the previous paragraph imply the desired equality: 
\begin{multline*} \int_{\T^n}h_y(x)d\mu(x)=\sum_{k=1}^m\beta_k\int_{\T^n}1_{S_k}(x+y)d\mu(x)=\sum_{k=1}^m\beta_k\int_{\T^n}1_{S_k}(x)d\mu(x)\\
=\int_{\T^n}\left(\sum_{k=1}^m\beta_k1_{S_k}(x)\right)d\mu(x)=\int_{\T^n}h(x)d\mu(x).
\end{multline*}
Now assume that $h$ is an arbitrary nonnegative measurable function. The construction of the abstract Lebesgue integral (in particular, the construction of the Haar integral) \cite[Chapter 2]{Cohn2013book}  shows that there exists an increasing sequence $(h_k)_{k=1}^{\infty}$ of nonnegative  simple functions which converges pointwise to $h$. Hence the sequence $(h_k(x+y))_{k=1}^{\infty}$ of nonnegative numbers increases to $h(x+y)$ for all $x,y\in\T^n$, i.e., for each $y\in\T^n$ the function  $h_y$ is the increasing limit of the sequence $(h_{k,y})_{k=1}^{\infty}$ of nonnegative and measurable functions (here $h_{k,y}(x):=(h_k)_y(x)=h_k(x+y)$ for all  $x,y\in\T^n$ and all $k\in\N$). Thus $h_y$ is measurable. These facts, together with the monotone  convergence theorem and the translation invariance of the integral of simple functions  established earlier, all imply that for each $y\in\T^n$ 
\begin{equation*}
\int_{\T^n}h(x)d\mu(x)=\lim_{k\to\infty}\int_{\T^n}h_k(x)d\mu(x)
=\lim_{k\to\infty}\int_{\T^n}h_{k,y}(x)d\mu(x)=\int_{\T^n}h_y(x)d\mu(x).
\end{equation*}
Finally, let $h\in L_1(\T^n)$ be arbitrary. Then $h=\re(h)+i\im(h)=(\re(h)_+-\re(h)_-)+i(\im(h)_+-\im(h)_-)$ where, as usual, $g_+:=\max\{0,g\}$ and $g_-:=-\min\{0,g\}$ for each function $g:\T^n\to\R$ and $\re(h)_+,\re(h)_-,\im(h)_+,\im(h)_-$ are all nonnegative and  integrable over $\T^n$. By what established in the previous  paragraph and the linearity of the integral we conclude that $h_y$ is measurable and $\int_{\T^n}h_y(x)d\mu(x)=\int_{\T^n}h(x)d\mu(x)$ for all $y\in\T^n$. 
\end{proof}

An examination of the proof of Lemma \bref{lem:Torus} given above shows that it can be generalized almost word for word to any locally compact topological group (instead of $\T^n$). 

\vspace{0.4cm}
\noindent{\bf Acknowledgments}\vspace{0.1cm}\\
\noindent  I would like to use this opportunity to thank several people for either helpful  remarks and/or for reading the paper and/or for sending me copies of some papers, especially to Orr Shalit, Eytan Paldi, 
Tadeusz Figiel, Roman Ger, Maciej  Sablik, Zoltan Boros, and Jolanta Olko. I also thank the referee for valuable remarks. Parts of this work were done in  various years when I have been associated with various institutes: The Technion - Israel Institute of Technology, Haifa, Israel (2010, 2016),  IMPA - The National Institute of Pure and Applied Mathematics, Rio de Janeiro, Brazil (2013), ICMC - Institute of Mathematical and Computer Sciences, University of S\~ao Paulo, S\~ao Carlos, Brazil (2014-2015), and it is an opportunity to me to thank the Gurwin Foundation, a special postdoc fellowship from IMPA  (``P\'os-doutorado de Excel\^encia''), and FAPESP.

\bibliographystyle{acm}
\bibliography{biblio}

\end{document}